\title{Limited packings: related vertex partitions and duality issues}
\date{}
\author {
Azam Sadat Ahmadi, Nasrin Soltankhah\thanks{Corresponding author}\ \ and Babak Samadi\vspace{2mm}\\
Department of Mathematics, Faculty of Mathematical Sciences, Alzahra University,\\ Tehran, Iran\vspace{1mm}\\
{\tt as.ahmadi@alzahra.ac.ir}\\
{\tt soltan@alzahra.ac.ir}\\
{\tt b.samadi@alzahra.ac.ir}
}
\date{}
\newtheorem{theorem}{Theorem}[section]
\newtheorem{lemma}[theorem]{Lemma}
\theoremstyle{definition}
\theoremstyle{remark}
\begin{document}

\maketitle

\begin{abstract}
A $k$-limited packing partition ($k$LP partition) of a graph $G$ is a partition of $V(G)$ into $k$-limited packing sets. We consider the $k$LP partitions with minimum cardinality (with emphasis on $k=2$). The minimum cardinality is called $k$LP partition number of $G$ and denoted by $\chi_{\times k}(G)$. This problem is the dual problem of $k$-tuple domatic partitioning as well as a generalization of the well-studied $2$-distance coloring problem in graphs.

We give the exact value of $\chi_{\times2}$ for trees and bound it for general graphs. A section of this paper is devoted to the dual of this problem, where we give a solution to an open problem posed in $1998$. We also revisit the total limited packing number in this paper and prove that the problem of computing this parameter is NP-hard even for some special families of graphs. We give some inequalities concerning this parameter and discuss the difference between $2$TLP number and $2$LP number with emphasis on trees.
\end{abstract}
\textbf{2010 Mathematical Subject Classification:} 05C15, 05C69, 05C76\vspace{0.5mm}\\
\textbf{Keywords}: limited packing, $2$-limited packing partition number, NP-hard, tuple domination, lexicographic product, Nordhaus-Gaddum inequality.


\section{Introduction and preliminaries} 

Throughout this paper, we consider $G$ as a finite simple graph with vertex set $V(G)$ and edge set $E(G)$. The {\em open neighborhood} of a vertex $v$ is denoted by $N_{G}(v)$, and its {\em closed neighborhood} is $N_{G}[v]=N_{G}(v)\cup \{v\}$ (we simply write $N(v)$ and $N[v]$ if there is no ambiguity). The {\em minimum} and {\em maximum degrees} of $G$ are denoted by $\delta(G)$ and $\Delta(G)$, respectively. For $A,B\subseteq V(G)$, by $[A,B]$ we mean the set of edges with one end point in $A$ and the other in $B$. The subgraph of $G$ induced by $S\subseteq V(G)$ is denoted by $G[S]$. We use \cite{West} as a reference for terminology and notation which are not explicitly defined here.

Following \cite{hh1}, a vertex subset $S$ of a graph $G$ with $\delta(G)\geq k-1$ is a \textit{$k$-tuple dominating set} ($k$TD set) in $G$ if $|N[v]\cap S|\geq k$ for each $V\in V(G)$. The \textit{$k$-tuple domination number} $\gamma_{\times k}(G)$ is the smallest number of vertices in a $k$TD set in $G$. A \textit{$k$-tuple domatic partition} ($k$TD partition) of graph $G$ is a vertex partition of $G$ into $k$TD sets. The maximum number of sets in a vertex partition of $G$ into $k$TD sets is called \textit{$k$-tuple domatic number} and denoted by $d_{\times k}(G)$ (the study of this concept was initiated by Harary and Haynes \cite{hh2} in $1998$, and further investigated in many papers, for instance \cite{ssm,v}). Note that when $k=1$, $S$ and $\gamma_{\times1}(G)$ are the usual dominating set and domination number $\gamma(G)$, respectively. Moreover, $d_{\times1}(G)=d(G)$ is the well-studied domatic number (see \cite{ch}). The reader can find comprehensive information and overviews of domination theory in the books \cite{hhh,hhs}. 

A set of vertices $B\subseteq V(G)$ is called a \textit{packing} (resp. an \textit{open packing}) in $G$ provided that $N[u]\cap N[v]=\emptyset$ (resp. $N(u)\cap N(v)=\emptyset$) for each distinct vertices $u,v\in V(G)$. The maximum cardinality of a packing (resp. open packing) is called the \textit{packing number} (resp. \textit{open packing number}), denoted $\rho(G)$ (resp. $\rho_{o}(G)$). For more information about these topics, the reader can consult \cite{hhh} and \cite{hhs}. In $2010$, Gallant et al. (\cite{gghr}) introduced the concept of limited packing in graphs. In fact, a set $B\subseteq V(G)$ is said to be a \textit{$k$-limited packing} ($k$LP) in the graph $G$ if $|B\cap N[v]|\leq k$ for each vertex $v$ of $G$. The \textit{$k$-limited packing number} $L_{k}(G)$ is the maximum cardinality of a $k$LP in $G$. They also exhibited some real-world applications of it in network security, market situation, NIMBY and codes. This concept was next investigated in many papers, for instance, \cite{bcl,gz,s}. Similarly, a set $B\subseteq V(G)$ is said to be a \textit{$k$-total limited packing} ($k$TLP) if $|B\cap N(v)|\leq k$ for each vertex $v$ of $G$. The \textit{$k$-total limited packing number} $L_{k,t}(G)$ is the maximum cardinality of a $k$TLP in $G$. This concept was first studied in \cite{hms} and some theoretical applications of it were given in \cite{hmsv}. It is easy to see that the latter two concepts are the same with the concepts of packing and open packing when $k=1$.

The concept of distance coloring of graphs was initiated by Kramer and Kramer \cite{kk2,kk1} in $1969$. A {\em $2$-distance coloring} of a graph $G$ is a mapping of $V(G)$ to a set of colors such that any two vertices at distance at most two receive different colors. The minimum number of colors $k$ for which there is a $2$-distance coloring of $G$ is the {\em $2$-distance chromatic number} $\chi_{2}(G)$. We observe that the color classes of a $2$-distance coloring are the very packing sets ($1$LP sets). In view of this, a generalization of this concept is the vertex partitioning of graphs into $k$LP sets. A $k$-\textit{limited packing partition} (\textit{$k$LP partition}) is a vertex partition into $k$LP sets, and the $k$-limited packing partition number \textit{$k$LP partition number} $\chi_{\times k}(G)$ is the minimum cardinality taken over all $k$LP partitions of $G$. This problem can also be considered as the dual of $k$TD partition problem. We investigate and revisit these kinds of vertex partitions in this paper. Regarding $k$LP sets, our main emphasis is on $k=2$. One reason is that, in such problems, we lose some important families of graphs for large values of $k$ (for example, $\gamma_{\times k}$ and $d_{\times k}$ cannot be defined for trees when $k\geq3$) or we have a trivial problem (for example, $L_{\times k}(G)=|V(G)|$ and $\chi_{\times k}(G)=1$ if $k\geq \Delta(G)+1$). On the other hand, many results for $k\in \{1,2\}$ can be generalized to the general case $k$. Moreover, one may obtain stronger results when dealing with small values of $k$. 

This paper is organized as follows. We solve a characterization problem concerning the $k$TD partitions in graphs, leading to a solution to an open problem posed in $1998$ by Harary and Haynes (\cite{hh2}). In Section $3$, we give the exact values of $2$LP partition number for all trees, whose proof employs an efficient algorithm in order to obtain an optimal $k$LP partition. Several sharp inequalities concerning this parameter are given. In particular, we bounds $L_{2}$ and $\chi_{\times2}$ for the lexicographic product graphs from below and above. In Section $4$, we prove that the decision problem associated with $L_{2,t}$ is NP-complete even when restricted to chordal graphs and bipartite graphs. Some sharp inequalities concerning this parameter are given in this section. Finally, we discuss the relationship between $L_{2,t}$ and $L_{2}$ by bounding their difference for trees just in terms of the order.

For the sake of convenience, for any graph $G$ by an $\eta(G)$-set with $\eta\in \{L_{2},\chi_{\times k},\gamma_{\times2},\rho,\rho_{o},L_{2,t}\}$ we mean a $2$LP set, $k$LP partition, $2$TD set, packing set, open packing set and $2$TLP set in $G$ of cardinality $\eta(G)$, respectively.


\section{Duality issues}

Suppose that $\mathbb{P}=\{P_{1},\cdots,P_{\chi_{\times k}(G)}\}$ is a $\chi_{\times k}(G)$-partition of an arbitrary graph $G$. Let $v$ be a vertex of maximum degree of $G$. By definition, we deduce that $\Delta(G)+1=\sum_{i=1}^{\chi_{\times k}(G)}|N[v]\cap P_{i}|\leq k\chi_{\times k}(G)$. So, we have the following simple but important inequality
\begin{equation}\label{Delta}
\chi_{\times k}(G)\geq\lceil\frac{\Delta(G)+1}{k}\rceil.
\end{equation}
We shall make use of (\ref{Delta}), for $k=2$, in many places of this paper.

Recall that a maximization problem \textbf{M} and a minimization problem \textbf{N}, defined on the same instances (such as graphs or digraphs), are dual problems if the value of every candidate solution \textit{M} to \textbf{M} is less than or equal to the value of every candidate solution \textit{N} to \textbf{N}. Usually, the ``value" is cardinality.

Harary and Haynes (\cite{hh2}) in $1998$ proved that $d_{\times k}(G)\leq\lfloor(\delta(G)+1)/k\rfloor$ for each graph $G$ with $\delta(G)\geq k-1$. With this in mind, the inequality (\ref{Delta}) shows that the $k$LP partition problem and the $k$TD partition problem are dual. They also posed the following open problem:\vspace{1.5mm}\\
\textbf{Problem A.} (\cite{hh2}) Characterize all graphs $G$ with $\delta(G)\geq k-1$, for which $d_{\times k}(G)=\lfloor\frac{\delta(G)+1}{k}\rfloor$.\vspace{1mm}

We completely solve the above problem in this section from the structural points of view. To do so, let $q$ be a positive integer. By the division algorithm, there exist unique integers $t$ and $r$ satisfying $q+1=tk+r$ and $0\leq r\leq k-1$. We now begin with $t$ arbitrary graphs $H_{1},\cdots,H_{t}$ with $\delta(H_{i})\geq k-1$ for each $1\leq i\leq t$, and $\delta(H_{i'})\leq2k-2$ for some $1\leq i'\leq t$. We next add some edges with one end point in $V(H_{i})$ and the other in $V(H_{j})$ in such a way that\vspace{0.5mm}\\
(i) the minimum degree of the resulting graph equals $q$, and\vspace{0.5mm}\\
(ii) every vertex of $H_{i}$ has at least $k$ neighbors in $H_{j}$ for each $1\leq i\neq j\leq t$.\vspace{0.5mm}

Let $\Omega$ be the family of the above-constructed graphs $G$. Note that the condition ``$\delta(H_{i'})\leq2k-2$ for some $1\leq i'\leq t$" could not be removed. Otherwise, the statement (ii) would imply that $\delta(G)\geq kt+r=q+1$, which contradicts the statement (i). Figure \ref{G} depicts a representative member of $\Omega$.

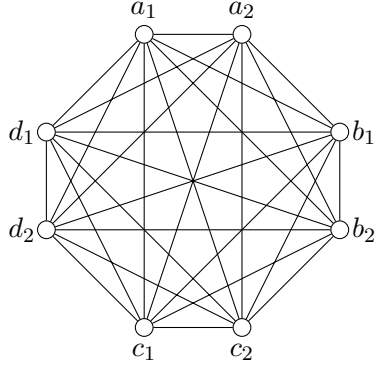
\begin{figure}[h]
\centering
\begin{tikzpicture}[scale=.65, transform shape]
\node [draw, shape=circle] (a1) at (0,0) {};
\node [draw, shape=circle] (a2) at (2,0) {};

\node [draw, shape=circle] (c1) at (0,-6) {};
\node [draw, shape=circle] (c2) at (2,-6) {};

\node [draw, shape=circle] (d1) at (-2,-2) {};
\node [draw, shape=circle] (d2) at (-2,-4) {};

\node [draw, shape=circle] (b1) at (4,-2) {};
\node [draw, shape=circle] (b2) at (4,-4) {};

\node [scale=1.5] at (0,0.5) {$a_1$};
\node [scale=1.5] at (2,0.5) {$a_2$};

\node [scale=1.5] at (0,-6.5) {$c_1$};
\node [scale=1.5] at (2,-6.5) {$c_2$};

\node [scale=1.5] at (-2.5,-2) {$d_1$};
\node [scale=1.5] at (-2.5,-4) {$d_2$};

\node [scale=1.5] at (4.5,-2) {$b_1$};
\node [scale=1.5] at (4.5,-4) {$b_2$};

\draw[](a1)--(a2);
\draw[](b1)--(b2);
\draw[](c1)--(c2);
\draw[](d1)--(d2);
\draw(a1)--(d1)--(a2)--(d2)--(a1)--(b2)--(a2)--(b1)--(a1);
\draw(c1)--(d1)--(c2)--(d2)--(c1)--(b2)--(c2)--(b1)--(c1);
\draw(c1)--(a1)--(c2)--(a2)--(c1);
\draw(b1)--(d1)--(b2)--(d2)--(b1);

\end{tikzpicture}
\caption{A graph $H\in \Omega$ with $q=7$, $k=2$ and $t=4$.}\label{G}
\end{figure} 

\begin{theorem}\label{Char}
Let $G$ be a graph with minimum degree at least $k-1$. Then, $d_{\times k}(G)=\lfloor\frac{\delta(G)+1}{k}\rfloor$ if and only if $G\in \Omega$.
\end{theorem}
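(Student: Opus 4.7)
The plan is to prove both implications by showing that the parts of an optimal $k$TD partition and the blocks $H_i$ in the construction of $\Omega$ play identical roles. For sufficiency, suppose $G \in \Omega$ is built from $H_1, \ldots, H_t$ as described. I would argue that $\mathcal{P} = \{V(H_1), \ldots, V(H_t)\}$ is itself a $k$TD partition: for any $v \in V(H_i)$, the assumption $\delta(H_i) \geq k-1$ yields $|N[v] \cap V(H_i)| \geq k$, while property~(ii) of the construction yields $|N[v] \cap V(H_j)| \geq k$ for $j \neq i$. This gives $d_{\times k}(G) \geq t$, which, combined with the Harary--Haynes upper bound $d_{\times k}(G) \leq \lfloor (\delta(G)+1)/k \rfloor$ and property~(i) (that is, $\delta(G) = q$), forces $d_{\times k}(G) = t = \lfloor(\delta(G)+1)/k\rfloor$.

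For necessity, assume $d_{\times k}(G) = t := \lfloor(\delta(G)+1)/k\rfloor$ and fix a $k$TD partition $\mathcal{P} = \{P_1, \ldots, P_t\}$. Writing $q = \delta(G)$ and $q+1 = tk+r$ with $0 \leq r \leq k-1$, I set $H_i := G[P_i]$. The $k$TD property of $P_i$ applied to a vertex $v \in P_i$ gives $\deg_{H_i}(v) \geq k-1$, hence $\delta(H_i) \geq k-1$; the $k$TD property of $P_j$ for $j \neq i$ applied to the same $v$ gives $|N(v) \cap P_j| \geq k$, which is property~(ii); and property~(i) is just $\delta(G) = q$. The one subtle remaining requirement is the existence of some $i'$ with $\delta(H_{i'}) \leq 2k-2$, which I would handle by contradiction: if $\delta(H_i) \geq 2k-1$ for every $i$, then every vertex $v \in P_i$ would satisfy
\begin{equation*}
\deg_G(v) \;\geq\; (2k-1) + (t-1)k \;=\; tk+k-1 \;\geq\; tk + r \;=\; q+1,
\end{equation*}
contradicting the existence of a vertex of degree $q$. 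Hence $G \in \Omega$.

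The main conceptual obstacle is this last step: one has to recognize that the apparently ad hoc clause ``$\delta(H_{i'}) \leq 2k-2$ for some $i'$'' in the definition of $\Omega$ is not an extra stipulation but is dictated by the tightness of the Harary--Haynes inequality, and is therefore automatically forced whenever that bound is attained. Everything else in the argument is a direct dictionary between the $k$TD-partition condition and properties~(i)--(ii) of the construction.
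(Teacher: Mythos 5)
Your proposal is correct and follows essentially the same route as the paper: the sufficiency direction verifies that $\{V(H_1),\ldots,V(H_t)\}$ is a $k$TD partition and sandwiches $d_{\times k}(G)$ between $t$ and the Harary--Haynes bound, and the necessity direction builds the $H_i$ from the parts of an optimal partition and forces $\delta(H_{i'})\leq 2k-2$ by the same degree-counting contradiction ($\deg_G(v)\geq (2k-1)+(t-1)k \geq q+1$). No gaps; this matches the paper's argument step for step.
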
  
\begin{proof}
Suppose first that $d_{\times k}(G)=\lfloor(\delta(G)+1)/k\rfloor$ for a graph $G$ with $\delta(G)\geq k-1$. Let $v$ be a vertex of minimum degree in $G$. Hence, we can write $\delta(G)+1=\deg(v)+1=t'k+r'$, where $0\leq r'\leq k-1$ and $t'=d_{\times k}(G)=\lfloor(\delta(G)+1)/k\rfloor$. Let $\mathbb{P}=\{S_{1},\cdots,S_{d_{\times k}(G)}\}$ be a $d_{\times k}(G)$-partition. By definition, we have that $\delta(G[S_{i}])\geq k-1$ for each $1\leq i\leq d_{\times k}(G)$, and that every vertex in $S_{i}$ has at least $k$ neighbors in $S_{j}$ for each $1\leq i\neq j\leq d_{\times k}(G)$. If $\delta(G[S_{i}])\geq2k-1$ for each $1\leq i\leq d_{\times k}(G)$, we have in particular that 
$$\delta(G)+1=\deg(v)+1\geq2k-1+k(t'-1)+1>t'k+r',$$
a contradiction. Therefore, $\delta(G[S_{i'}])\leq2k-2$ for some $1\leq i'\leq d_{\times k}(G)$.

It is now readily checked that $t'=d_{\times k}(G)$, $G[S_{1}],\cdots,G[S_{d_{\times k}(G)}]$, $r'$ and $\delta(G)$ correspond to $t$, $H_{1},\cdots,H_{t}$, $r$ and $q$, respectively, in the description of the members of $\Omega$. Thus, $G\in \Omega$.

Conversely, suppose that $G\in \Omega$. It is then easy to see that $\mathbb{P}=\{V(H_{1}),\cdots,V(H_{t})\}$ is a $k$TD partition of $G$. In particular, this shows that 
$$\lfloor\frac{\delta(G)+1}{k}\rfloor=\lfloor\frac{q+1}{k}\rfloor=\lfloor\frac{tk+r}{k}\rfloor=t\leq d_{\times k}(G)\leq \lfloor\frac{\delta(G)+1}{k}\rfloor.$$
This implies the desired equality. In fact, we have proved that $d_{\times k}(G)=\lfloor\frac{\delta(G)+1}{k}\rfloor$ if and only if $G\in \Omega$.
\end{proof}


\section{Vertex partitioning into $2$-limited packing sets}

In the following theorem, we prove that the lower bound in (\ref{Delta}), for $k=2$, gives the exact value of the $2$LP partition number of all trees. Moreover, its proof gives us a practical method to obtain an optimal $2$ partition of a given tree into $2$LP sets.

\begin{theorem}\label{Tree}
For any tree $T$, $\chi_{\times2}(T)=\lceil\frac{\Delta(T)+1}{2}\rceil$.
\end{theorem}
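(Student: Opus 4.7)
The lower bound $\chi_{\times 2}(T) \geq \lceil (\Delta(T)+1)/2 \rceil$ is immediate from (\ref{Delta}) with $k = 2$, so the theorem reduces to producing a $2$LP partition of $T$ using at most $k := \lceil (\Delta(T)+1)/2 \rceil$ classes. I plan to do this via a top-down recursive procedure on a rooted version of $T$, proved correct by induction on $|V(T)|$.

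The natural induction forces a strengthened hypothesis involving a \emph{ghost parent color}. Specifically, for every rooted tree $T_v$ with $\Delta(T_v) \leq 2k-1$, every $\alpha \in \{1,\ldots,k\}$, and every $\beta \in \{1,\ldots,k\} \cup \{\bot\}$, I claim there is a $2$LP coloring of $T_v$ with $c(v) = \alpha$ satisfying the enhanced inequality $|B_\gamma \cap N_{T_v}[v]| + [\gamma = \beta] \leq 2$ for every $\gamma$ (with the convention $[\gamma = \bot] := 0$). Here $\beta$ plays the role of the color of the vertex immediately outside $T_v$ that is adjacent to $v$; this encoding makes each subtree's constraints self-contained and lets the recursion proceed without looking outward.

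For the inductive step at $v$, list the children $c_1, \ldots, c_d$ and choose child colors $\gamma_1, \ldots, \gamma_d$ so that each $\gamma$ appears among them at most $2 - [\gamma = \alpha] - [\gamma = \beta]$ times. The total capacity equals $2k-1$ when $\beta = \bot$ and $2k-2$ otherwise. Since $d \leq \Delta(T)$ when $v$ is the true root and $d \leq \Delta(T)-1$ when it is not, the bound $2k \geq \Delta(T)+1$ always guarantees enough capacity for a valid assignment. Then I would recurse on each subtree $T_{c_i}$ with root color $\gamma_i$ and ghost-parent color $\alpha$. The enhanced constraint at $v$ is enforced by the capped child-coloring; the enhanced constraint at each $c_i$ matches $|B_\gamma \cap N_T[c_i]| \leq 2$ exactly because $v$ is the only vertex of $N_T[c_i]$ lying outside $T_{c_i}$; and sibling subtrees, being vertex-disjoint with no edges between them in $T$, cannot interact.

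The main conceptual obstacle is pinning down this inductive statement: a naive BFS-greedy procedure that colors vertices one at a time can fail when $\Delta(T)$ is odd, because the last child of a degree-$\Delta$ parent whose own parent is already colored may find all $k$ colors saturated in its parent's closed neighborhood. The ghost-parent formulation bypasses this by allocating colors to all children of $v$ simultaneously, exactly exploiting the tight slack $2k-2 \geq \Delta(T)-1$. A pleasant bonus is that the recursion doubles as a linear-time constructive algorithm, matching the paper's promise of a practical method to produce an optimal $2$LP partition.
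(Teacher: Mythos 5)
Your construction is essentially the paper's: both root $T$ and color top-down so that each color class meets each closed neighborhood at most twice, with the children of each vertex packed two-per-color while reserving slack for the parent's color; your ghost-parent induction is a cleaner, more rigorous packaging of the paper's explicit parity case analysis, and the capacity count $2k-1-[\beta\neq\bot]\geq d$ is exactly the arithmetic the paper exploits. One caveat: your inductive claim as literally quantified (``every rooted tree $T_v$ with $\Delta(T_v)\leq 2k-1$, every $\alpha$, every $\beta$'') is false --- take $k=2$, $T_v=K_{1,3}$ rooted at its center, and $\alpha=\beta$, where the three children need three color slots but only two are available --- so you must add the hypothesis $\deg_{T_v}(v)\leq 2k-1-[\beta\neq\bot]$, which is precisely what holds for the root (where $\beta=\bot$) and for every subtree $T_{c_i}$ arising in the recursion (where $\deg_{T_{c_i}}(c_i)\leq\Delta(T)-1\leq 2k-2$); with that adjustment the argument is sound.
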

\begin{proof}
The equality trivially holds when $|V(T)|=1$. So, we may assume that $|V(T)|\geq2$. By taking the inequality (\ref{Delta}) with $k=2$ into account, it suffices to construct a $2$LP partition of $T$ of cardinality $p=\lceil(\Delta(T)+1)/2\rceil$.

Let $x$ be vertex of maximum degree of $T$. We root $T$ at $x$. We distinguish two cases depending on the behavior of $\Delta(T)$.\vspace{0.5mm}\\
\textit{Case 1.} $\Delta(T)\equiv0$ (mod $2$). Let $N(x)=\{u_{1},\cdots,u_{\Delta(T)}\}$. We define $f:V(T)\rightarrow \mathbb{N}$ as follows. Let $f(x)=p$ and $f(u_{2i-1})=f(u_{2i})=i$ for each $1\leq i\leq \Delta(T)/2$. If $T$ is a star, then we stop the process. Otherwise, there exists a child $u_{k}$ of $x$, for some $1\leq k\leq \Delta(T)$, which is not a leaf. Since $\deg(u_{k})\leq \deg(x)=\Delta(T)$, it follows that $u_k$ has at most $\Delta(T)-1$ children, say $w_{1},\cdots,w_{t}$. If $t$ is odd, let $f(w_t)=p$ and $f(w_{2i-1})=f(w_{2i})=c_i$, in which $c_{i}\in\{1,\cdots,\Delta(T)/2\}\setminus \{f(u_k)\}$ for each $1\leq i\leq(t-1)/2$. If $t$ is even, let $f$ assign the first $t/2$ labels from $\{1,\cdots,\Delta(T)/2\}\setminus\{f(u_{k})\}$ to the vertices $w_{1},\cdots,w_{t}$ in such a way that $f(w_{2i-1})=f(w_{2i})$ for each $1\leq i\leq t/2$. This process is continued until all descendants of $x$ are labeled under $f$.\vspace{0.5mm}\\
\textit{Case 2.} $\Delta(T)\equiv1$ (mod $2$). We define $f:V(T)\rightarrow \mathbb{N}$ as follows. Let $f(x)=f(u_{\Delta(T)})=p$ and $f(u_{2i-1})=f(u_{2i})=i$ for every $1\leq i\leq(\Delta(T)-1)/2$. If $T$ is isomorphic to a star, we then stop the process. Otherwise, $x$ has a child $u_r$ which is not a leaf. Let $z_{1},\cdots,z_{s}$ be the children of $u_r$. Notice that $s\leq \Delta(T)-1$. We now consider two more possibilities.\vspace{0.5mm}

\textit{Subcase 2.1.} $r=\Delta(T)$. Let $f$ assign $\lceil s/2\rceil$ labels from $\{1,\cdots,(\Delta(T)-1)/2\}$ to the vertices $z_{1},\cdots,z_{s}$ so that each labels appears on at most two vertices.\vspace{0.5mm}

\textit{Subcase 2.2.} $1\leq r\leq \Delta(T)-1$. If $s$ is odd, then $s\leq \Delta(T)-2$ (since $s$ and $\Delta(T)-1$ have different parity). In such a situation, $f$ assigns the first $(s-1)/2$ labels from $\{1,\cdots,(\Delta(T)-1)/2\}\setminus \{f(u_{r})\}$ to the vertices $z_{1},\cdots,z_{s-1}$ in such a way that each label appears on precisely two vertices, and assigns $p$ to $z_s$. If $s$ is even, let $(f(z_{s-1}),f(z_{s}))=(f(u_{r}),p)$ and let $f$ assign $(s-2)/2$ labels from $\{1,\cdots,(\Delta(T)-1)/2\}\setminus \{f(u_{r})\}$ to the vertices $z_{1},\cdots,z_{s-2}$ such that each label appears on exactly two vertices. Again, this process ends when all descendants of $x$ are labeled under $f$.\vspace{0.5mm}

In each case, we observe that $\big{\{}\{v\in V(T)\mid f(v)=i\}\mid 1\leq i\leq p\big{\}}$ is a $2$LP partition of $T$ of cardinality $p=\lceil(\Delta(T)+1)/2\rceil$, as desired.
\end{proof}

We define the family $\Lambda$ as follows. Let $H$ be an $r$-partite graph with partite sets $V_1,V_2,\cdots,V_r$ such that\vspace{0.5mm}\\
$\bullet$ $|V_1|,|V_2|,\cdots,|V_r|\equiv0$ (mod $2$) and\vspace{0.5mm}\\
$\bullet$ $H[V_i\cup V_j]$ is a 2-regular graph for each $1\leq i\neq j\leq r$.\vspace{0.5mm}\\
Let $G$ be obtained from $H$ by constructing a prefect matching using the vertices of $V_i$ for each $1\leq i\leq r$.
Let $\Lambda$ be the family of all graphs $G$. A representative member of $\Lambda$ is depicted in Figure \ref{Fig2}.

\begin{figure}[h]
\centering
\begin{tikzpicture}[scale=.4, transform shape]
\node [draw, shape=circle] (a1) at (0,0) {};
\node [draw, shape=circle] (a2) at (2,0) {};
\node [draw, shape=circle] (a3) at (4,0) {};
\node [draw, shape=circle] (a4) at (6,0) {};

\node [draw, shape=circle] (b1) at (9,-5.2) {};
\node [draw, shape=circle] (b2) at (8,-6.94) {};
\node [draw, shape=circle] (b3) at (7,-8.67) {};
\node [draw, shape=circle] (b4) at (6,-10.4) {};

\node [draw, shape=circle] (c1) at (-3,-5.2) {};
\node [draw, shape=circle] (c2) at (-2,-6.94) {};
\node [draw, shape=circle] (c3) at (-1,-8.67) {};
\node [draw, shape=circle] (c4) at (0,-10.4) {};

\node [scale=1.5] at (0,0.7) {a1};
\node [scale=1.5] at (2,0.7) {a2};
\node [scale=1.5] at (4,0.7) {a3};
\node [scale=1.5] at (6,0.7) {a4};

\node [scale=1.5] at (9.7,-5.2) {b1};
\node [scale=1.5] at (8.7,-6.94) {b2};
\node [scale=1.5] at (7.7,-8.67) {b3};
\node [scale=1.5] at (6.7,-10.4) {b4};

\node [scale=1.5] at (-3.7,-5.2) {c1};
\node [scale=1.5] at (-2.7,-6.94) {c2};
\node [scale=1.5] at (-1.7,-8.67) {c3};
\node [scale=1.5] at (-0.7,-10.4) {c4};

\draw(a1)--(a2);
\draw(a3)--(a4);
\draw(b1)--(b2);
\draw(b3)--(b4);
\draw(c1)--(c2);
\draw(c3)--(c4);
\draw(c1)--(a1)--(c2)--(b1)--(c1)--(a2)--(c2)--(b2)--(c1);
\draw(c3)--(a3)--(c4)--(a4)--(c3)--(b3)--(c4)--(b4)--(c3);
\draw(a4)--(b1)--(a3)--(b2)--(a4);
\draw(a1)--(b4)--(a2)--(b3)--(a1);

\end{tikzpicture}
\caption{A graph $G\in \Lambda$ with $r=3$ and $|V_i|=4$ for each $1\leq i \leq 3$.}\label{Fig2}
\end{figure}

\begin{theorem}
For any graph $G$ of order $n$ and size $m$,
$$\chi_{\times 2}(G)\geq \frac{1}{2}\Big{(}1+max\Big{\{}\frac{2m}{n},\sqrt{1+\frac{4m-2n}{L_2(G)}}\Big{\}}\Big{)}$$
with equality if and only if $G \in \Lambda$.
\end{theorem}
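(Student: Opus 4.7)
The plan is to fix a $\chi_{\times 2}(G)$-partition $\mathbb{P}=\{P_1,\ldots,P_p\}$ with $p=\chi_{\times 2}(G)$ and derive each of the two lower bounds by double counting that exploits $|N[v]\cap P_i|\le 2$. The bound $p\ge \tfrac{1}{2}(1+2m/n)$ is obtained by the same averaging idea as in (\ref{Delta}) but applied to every vertex rather than just a maximum-degree one: summing $|N[v]|=\sum_{i=1}^{p}|N[v]\cap P_i|\le 2p$ over all $v$ gives $2m+n\le 2np$, which rearranges to the claimed bound.

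For the second bound, for each $v\in V(G)$ write $i(v)$ for the index of the part containing $v$. Since $v\in N[v]\cap P_{i(v)}$ and $|N[v]\cap P_{i(v)}|\le 2$, we have $|N(v)\cap P_{i(v)}|\le 1$. Splitting $\deg(v)=|N(v)\cap P_{i(v)}|+\sum_{j\ne i(v)}|N(v)\cap P_j|$, subtracting $1$ and summing over $v$ yields
\[
2m-n=\sum_{v\in V(G)}(\deg(v)-1)\le\sum_{i=1}^{p}\sum_{j\ne i}|[P_i,P_j]|=2\sum_{i<j}|[P_i,P_j]|.
\]
Since $|N[v]\cap P_j|\le 2$ for each $v\in P_i$, we have $|[P_i,P_j]|\le 2\min(|P_i|,|P_j|)\le 2L_2(G)$, hence $2m-n\le 2p(p-1)L_2(G)$. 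Solving this quadratic inequality in $p$ produces $p\ge \tfrac{1}{2}\bigl(1+\sqrt{1+(4m-2n)/L_2(G)}\bigr)$.

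The main technical part is the equality characterization. If equality holds in the theorem then at least one of the two estimates is sharp. If the first is sharp, tracing back forces $|N[v]\cap P_i|=2$ for every $v$ and every $i$; this implies that each $G[P_i]$ is a perfect matching (so $|P_i|$ is even) and each bipartite subgraph $G[P_i,P_j]$ is $2$-regular (so $|P_i|=|P_j|$), exhibiting $G$ as a member of $\Lambda$. If instead the second bound is sharp, the chain $2m-n\le 2\sum_{i<j}|[P_i,P_j]|\le 2p(p-1)L_2(G)$ collapses to equalities, forcing $|N(v)\cap P_{i(v)}|=1$ for every $v$ (so $G[P_i]$ is a perfect matching) together with $|[P_i,P_j]|=2L_2(G)$ for all $i<j$ (so $|P_i|=L_2(G)$ and $G[P_i,P_j]$ is $2$-regular); once more $G\in\Lambda$. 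For the converse, if $G\in\Lambda$ with $r$ parts of common size, every vertex has degree $2r-1$ and $\{V_1,\ldots,V_r\}$ is itself a $2$LP partition, so $\chi_{\times 2}(G)=r$; substituting $2m=n(2r-1)$ shows $\tfrac12(1+2m/n)=r$, making the max equal to $\chi_{\times 2}(G)$. The obstacle I expect is the equality bookkeeping: one must ensure that sharpness of either bound simultaneously extracts every axiom of $\Lambda$, namely the evenness of $|V_i|$, the matching inside each part, and the $2$-regularity of $H[V_i\cup V_j]$.
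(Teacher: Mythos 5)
Your proposal is correct and follows essentially the same route as the paper: the same double count of $\sum_{v}\sum_{i}|N[v]\cap P_i|$ for the first bound, the same edge count leading to $2m-n\le 2L_2(G)\,p(p-1)$ for the second (your symmetric bound $|[P_i,P_j]|\le 2\min(|P_i|,|P_j|)\le 2L_2(G)$ replaces the paper's sorted telescoping sum but yields the identical quadratic), and the same equality tracing to extract the matching inside each part and the $2$-regularity between parts. Your converse is a legitimate shortcut (tightness of the first bound alone settles the max), whereas the paper also verifies $L_2(G)=n/r$ to show the second bound is tight as well.
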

\begin{proof}
Let $\{P_1,P_2,\cdots,P_{\chi_{\times2}(G)}\}$ be a $2$LP partition of $G$. Then,
\begin{equation}\label{Ine1}
\sum_{v\in V(G)}\sum _{i=1}^{\chi_{\times2}(G)}|N_G[v]\cap P_i|\leq \sum_{v\in V(G)}\sum_{i=1}^{\chi_{\times2}(G)}2=2n\chi_{\times2}(G).
\end{equation}

On the other hand,
\begin{equation*}
\begin{array}{lcl}
\sum_{v\in V(G)}\sum_{i=1}^{\chi_{\times2}(G)}|N_G[v]\cap P_i|&=&\sum_{v\in V(G)}|N_G[v]\cap (\cup_{i=1}^{\chi_{\times 2}(G)}P_i)|\\
&=&\sum_{v\in V(G)}|N_G[v]|=\sum_{v\in V(G)}(\deg(v)+1)=2m+n.
\end{array}
\end{equation*}
So, $2m+n\leq 2n\chi_{\times2}(G)$. Therefore,
\begin{equation}\label{LB1}
\chi_{\times2}(G)\geq m/n+1/2.
\end{equation}

We now characterize all graphs for which the equality holds in the lower bound (\ref{LB1}). Suppose that $\chi_{\times 2}(G)=m/n+1/2$ for a graph $G$. Therefore, the inequality (\ref{Ine1}) necessarily holds with equality. This means that $|N_G[v]\cap P_i|=2$ for each $v\in V(G)$ and $1\leq i\leq \chi_{\times 2}(G)$. Hence, each vertex of $P_i$ is adjacent to precisely one vertex in $P_i$ and that it has exactly two neighbors in $P_j$ for every $1\leq j\neq i\leq \chi_{\times2}(G)$. This means that the induced subgraph $G[P_i]$ is $1$-regular, and hence $|P_i|$ is even for every $1\leq i\leq \chi_{\times2}(G)$. Since every vertex of $P_i$ is adjacent to precisely two vertices of $P_j$ for each $1\leq i\neq j\leq \chi_{\times2}(G)$, we infer that $|P_i|=|P_j|$. We now observe that $G\in \Lambda$ because $P_i$ and $\chi_{\times2}(G)$ correspond to $V_i$ and $r$, respectively, given in the definition of $\Lambda$.

Conversely, let $G\in \Lambda$. Since every $V_i$ in $G$ is a $2$LP set, it follows that $\chi_{\times 2}(G)\leq r$.
On the other hand, $\chi_{\times2}(G)\geq(1+2m/n)/2=(1+n(2r-1)/n)/2=r$. Thus, $\chi_{\times2}(G)=m/n+1/2$.

Let $\{P_1,P_2,\cdots,P_{\chi_{\times2}(G)}\}$ be a $\chi_{\times2}(G)$-partition with $|P_1|\leq|P_2|\leq \cdots \leq|P_{\chi_{\times2}(G)}|$. Then,
\begin{equation}\label{Ine2}
\begin{array}{lcl}
m&=&\sum_{i=1}^{\chi_{\times2}(G)}|[P_i,P_i]|+\sum_{1\leq i<j\leq\chi_{\times2}(G)}|[P_i,P_j]|\\
&\leq&\sum_{i=1}^{\chi_{\times 2}(G)}\frac{|P_i|}{2}+2|P_1|(\chi_{\times2}(G)-1)+2|P_2|(\chi_{\times2}(G)-2)+\cdots+2|P_{\chi_{\times2}(G)}|(\chi_{\times2}(G)-\chi_{\times2}(G))\\
&=&\sum_{i=1}^{\chi_{\times2}(G)}\frac{|P_i|}{2}+\sum_{i=1}^{\chi_{\times2}(G)}2|P_i|(\chi_{\times2}(G)-i)\\
&\leq& \frac{n}{2}+L_2(G)\chi_{\times2}(G)(\chi_{\times2}(G)-1).
\end{array}
\end{equation}
So, $2L_2(G)\chi_{\times2}(G)^{2}-2L_2(G)\chi_{\times2}(G)+n-2m\geq0$. Solving this equation for $\chi_{\times 2}(G)$, we get
\begin{equation}\label{LB2}
\chi_{\times2}(G)\geq\big{(}1+\sqrt{1+(4m-2n)/L_2(G)}\big{)}/2.
\end{equation}
Suppose that the equality holds in (\ref{LB2}). In such a situation, all inequalities in (\ref{Ine2}) hold with equality. In fact, we have\vspace{0.75mm}\\
$(i)$ $|[P_i,P_i]|=|P_i|/2$ for each $1\leq i\leq \chi_{\times 2}(G)$,\vspace{0.75mm}\\ 
$(ii)$ $\sum_{1\leq i<j\leq\chi_{\times2}(G)}|[P_i,P_j]|=\sum_{i=1}^{\chi_{\times2}(G)}2|P_i|(\chi_{\times2}(G)-i)$, and\vspace{0.75mm}\\
$(iii)$ $|P_i|=L_2(G)$ for each $1\leq i\leq \chi_{\times2}(G)$.\vspace{0.75mm}

Since $|[P_i,P_i]|=|P_i|/2$ and $P_i$ is a $2$LP set, each vertex of $P_i$ has exactly one neighbor in $P_i$. This implies that the induced subgraph $G[P_i]$ is $1$-regular for each $1\leq i\leq \chi_{\times2}(G)$. We deduce from $(ii)$ that every vertex $v\in P_i$ has precisely two neighbors in $P_j$ and hence $2|P_i|=|[P_i,P_j]|=2|P_j|$ for each $1\leq i<j\leq \chi_{\times2}(G)$. Particularly, all $|P_i|$'s are even. We now observe that $P_i$ and $\chi_{\times 2}(G)$ correspond to $V_i$ and $r$, respectively, given in the description of the members of $\Lambda$. Thus, $G\in \Lambda$.

Conversely, let $G\in \Lambda$. So, $\chi_{\times2}(G)\leq r$. Since $V_i$ is a $2$LP set for each $1\leq i\leq r$, it follows that $L_2(G)\geq n/r$. On the other hand, let $S$ be a $\gamma_{\times 2}(G)$-set (such a set exists because $\delta(G)\geq1$) and let $B$ be a $L_2(G)$-set. We set $Q=\{(x,y)|x\in B,y\in S,x\in N[y]\}$. By the definitions, the closed neighborhood of every vertex in $B$ (resp. $S$) contains at least (resp. at most) two vertices in $S$ (resp. $B$). This shows that $2|B|\leq|Q|\leq2|S|$. Hence, $L_2(G)\leq \gamma_{\times2}(G)$. Since $V_i$ is also a $2$-tuple dominating set for each $1\leq i\leq r$, we have $\gamma_{\times2}(G)\leq n/r$. Therefore, $L_2(G)=n/r$. Moreover, by using $4m=2n(2r-1)$, we have $\chi_{\times2}(G)\geq \frac{1}{2}\big{(}1+\sqrt{1+(4m-2n)/L_2(G)}\big{)}=r$. This leads to the equality in the lower bound (\ref{LB2}).

All in all, the desired lower bound follows from (\ref{LB1}) and (\ref{LB2}). Furthermore, $\Lambda$ is the family of all graphs attaining the bound.
\end{proof}

There is a simple but strong relationship between the $2$LP partition number and the well-studied $2$-distance chromatic number of graphs stated as follows.

\begin{theorem}
For any graph $G$, $\chi_{\times 2}(G)\leq \lceil\frac{\chi_2(G)}{2}\rceil$.
\end{theorem}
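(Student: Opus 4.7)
The plan is to exploit the fact, already noted in the introduction, that the color classes of a $2$-distance coloring are precisely $1$-limited packing sets (packings). Given this, the strategy is to take an optimal $2$-distance coloring of $G$ and merge its color classes in pairs to obtain a $2$LP partition of size $\lceil \chi_{2}(G)/2 \rceil$.

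More concretely, I would start by fixing an optimal $2$-distance coloring of $G$ with color classes $C_{1},\ldots,C_{\chi_{2}(G)}$. Since any two vertices at distance at most $2$ receive different colors, each $C_{i}$ is a packing in $G$, i.e.\ $|C_{i}\cap N[v]|\leq 1$ for every $v\in V(G)$. The key observation is that the union of two packings is a $2$LP set: if $B_{1}$ and $B_{2}$ are packings, then for any $v\in V(G)$,
\[
|(B_{1}\cup B_{2})\cap N[v]| \leq |B_{1}\cap N[v]|+|B_{2}\cap N[v]| \leq 1+1 = 2.
\]

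Now I would pair the color classes. If $\chi_{2}(G)$ is even, say $\chi_{2}(G)=2\ell$, set $P_{i}=C_{2i-1}\cup C_{2i}$ for $1\leq i\leq \ell$; by the observation above, each $P_{i}$ is a $2$LP set, and $\{P_{1},\ldots,P_{\ell}\}$ is a partition of $V(G)$, hence $\chi_{\times 2}(G)\leq \ell =\chi_{2}(G)/2=\lceil \chi_{2}(G)/2\rceil$. If $\chi_{2}(G)$ is odd, say $\chi_{2}(G)=2\ell+1$, form the same $\ell$ pairs from $C_{1},\ldots,C_{2\ell}$ and take $C_{2\ell+1}$ itself as a ($1$-limited, hence $2$-limited) packing set; this yields a $2$LP partition of size $\ell+1=\lceil \chi_{2}(G)/2\rceil$.

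There is essentially no obstacle: once the simple ``union of two packings is a $2$-limited packing'' lemma is in hand, the bound is immediate. The only care needed is the odd case, where one color class must be kept alone, which is exactly what the ceiling in the stated bound accommodates.
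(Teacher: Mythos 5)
Your proof is correct and follows essentially the same route as the paper: take an optimal $2$-distance coloring, note each color class is a packing, and merge the classes in pairs (with one class left alone when $\chi_2(G)$ is odd) to obtain a $2$LP partition of size $\lceil\chi_2(G)/2\rceil$. Your write-up is in fact slightly more careful than the paper's, since you make the pairing and the odd case explicit.
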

\begin{proof}
Let $f$ be a $2$-distance coloring with minimum number of colors. Note that the color classes $B_i$, $1\leq i\leq \chi_2(G)$, give us a vertex partition of $G$ into packing sets. We now construct a $2$LP partition of $G$ as follows. It is obvious that $|N[v]\cap(B_i\cup B_j)|\leq2$ for each $v\in V(G)$ and $1\leq i \neq j \leq \chi_2(G)$. So, it suffices to mutually combine the members of $\mathbb{B}$ to get a $2$LP partition of $G$. This leads to $\chi_{\times 2}(G)\leq \lceil \chi_2(G)/2\rceil$.

It was showed in \cite{nk} that $\chi_2(T)=\Delta(T)+1$ for every tree $T$. Hence, the upper bound is sharp for all trees in view of Theorem \ref{Tree} and the inequality (\ref{Delta}) with $k=2$.
\end{proof}

Nordhaus and Gaddum \cite{NG} exhibited lower and upper bounds on the sum and product of the chromatic number of a graph and its complement in terms of the order. Since then, bounds on $\Phi(G)+\Phi(\overline{G})$ and $\Phi(G)\Phi(\overline{G})$, in which $\Phi$ is any graph parameter, are called Nordhaus-Gaddum inequalities. The reader can consult \cite{ah} for more information on this subject. Regarding the $2$LP numbers, it was proved in \cite{s} that
\begin{equation}\label{B}
L_{2}(G)+L_{2}(\overline{G})\leq n+2
\end{equation}
for each graph of order $n$. We here give such an inequality concerning the $2$LP partition numbers.

\begin{theorem}
For any graph $G$ of order $n$, $\chi_{\times2}(G)+\chi_{\times2}(\overline G)\geq \frac{n+2}{2}$. Moreover, this bound is sharp.
\end{theorem}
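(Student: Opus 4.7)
The plan is to apply the same counting argument behind inequality~(\ref{Delta}) to an \emph{arbitrary} vertex, not just one of maximum degree, and then evaluate it simultaneously in $G$ and $\overline G$. For any $2$LP partition $\mathbb{P}$ of $G$ and any vertex $v$,
$$\deg_G(v)+1=\sum_{P\in\mathbb{P}}|N_G[v]\cap P|\leq 2|\mathbb{P}|,$$
so $\chi_{\times 2}(G)\geq \lceil(\deg_G(v)+1)/2\rceil$. The symmetric statement holds in $\overline G$, and using $\deg_G(v)+\deg_{\overline G}(v)=n-1$ with the same $v$ would yield
$$\chi_{\times 2}(G)+\chi_{\times 2}(\overline G)\geq \left\lceil\frac{d+1}{2}\right\rceil+\left\lceil\frac{n-d}{2}\right\rceil,\qquad d:=\deg_G(v).$$

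The next step is a short case analysis on the parities of $n$ and $d$. If $n$ is even, then regardless of the parity of $d$ the right-hand side evaluates to exactly $(n+2)/2$, so the inequality holds for every choice of $v$. If $n$ is odd, the right-hand side equals $(n+3)/2$ when $d$ is even but only $(n+1)/2$ when $d$ is odd, a half-unit short of the target.

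The main obstacle is therefore the case $n$ odd, where an arbitrary choice of $v$ can leave the bound too weak. I would resolve it by a handshake-lemma parity trick: since $\sum_{v}\deg_G(v)=2|E(G)|$ is even while the sum of $n$ odd integers is odd when $n$ is odd, some vertex $v$ must have even degree in $G$. Applying the preceding inequality to such a $v$ then delivers $(n+3)/2\geq(n+2)/2$ in the remaining case.

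For sharpness, I would exhibit $G=K_n$ with $n$ even. Because $N_{K_n}[v]=V(K_n)$, every $2$LP set of $K_n$ has at most two vertices, forcing $\chi_{\times 2}(K_n)=n/2$; and $\chi_{\times 2}(\overline{K_n})=1$ since $\overline{K_n}$ is edgeless. The sum equals exactly $(n+2)/2$, matching the bound.
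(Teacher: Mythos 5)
Your proof is correct, and it takes a genuinely different route from the paper's. The paper applies the bound $\chi_{\times2}(G)\geq(\Delta(G)+1)/2$ to a maximum-degree vertex of $G$ and of $\overline{G}$ separately, obtains $\chi_{\times2}(G)+\chi_{\times2}(\overline G)\geq\frac{n+\Delta(G)-\delta(G)+1}{2}\geq\frac{n+1}{2}$, and then rules out equality at $\frac{n+1}{2}$ by a structural analysis of an extremal partition: equality forces $G$ to be regular and every part $P_i$ to induce a perfect matching, hence $n$ even, a contradiction with $\frac{n+1}{2}$ being attained. You instead evaluate the counting argument at a \emph{single} vertex $v$ in both $G$ and $\overline G$, keep the ceilings, and reduce everything to arithmetic: the only troublesome case ($n$ odd, $d$ odd) is eliminated by the handshake lemma, which guarantees a vertex of even degree when $n$ is odd. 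Your argument is shorter and purely parity-based, avoiding any analysis of what optimal partitions look like; the paper's argument is longer but extracts structural information about near-extremal graphs (regularity, parts inducing perfect matchings) that is reused in spirit elsewhere in the paper. Your sharpness example $K_n$ with $n$ even is also simpler than the paper's $K_{2p}$ minus a perfect matching, and both are valid: every $2$LP set of $K_n$ has at most two vertices, so $\chi_{\times2}(K_n)=n/2$, while $\chi_{\times2}(\overline{K_n})=1$, giving the sum $\frac{n+2}{2}$.
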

\begin{proof}
Let $\mathbb{P}=\{P_1,P_2,\cdots,P_{\chi_{\times2}(G)}\}$ be a $\chi_{\times2}(G)$-partition. Taking the inequality (\ref{Delta}) with $k=2$ into account, we get
\begin{equation}\label{NoGa}
\chi_{\times 2}(G)+\chi_{\times 2}(\overline G)\geq \frac{\Delta(G)+1}{2}+\frac{\Delta(\overline G)+1}{2}=\frac{n+\Delta(G)-\delta(G)+1}{2}\geq \frac{n+1}{2}.
\end{equation}

Suppose that we have $\chi_{\times2}(G)+\chi_{\times2}(\overline G)=(n+1)/2$ for some graph $G$ of order $n$. Taking this into consideration and (\ref{NoGa}), we deduce that $\chi_{\times 2}(G)=(\Delta(G)+1)/2$ and that $G$ is a regular graph, necessarily. Consider an arbitrary $2$LP set $P_i\in \mathbb{P}$ and let $v\in P_i$. Since $\chi_{\times 2}(G)=(\deg(v)+1)/2$, it follows that $v$ necessarily has exactly one neighbor in $P_i$ (and exactly two neighbors in $P_j$ for each $1\leq i\neq j\leq \chi_{\times2}(G)$). In particular, this implies that the subgraph induced by $P_i$ has a perfect matching for each $1\leq i\leq \chi_{\times 2}(G)$. Therefore, $n$ is even. So, we have $\chi_{\times2}(G)+\chi_{\times2}(\overline G)\geq(n+2)/2$.

Let $H$ be obtained from the complete graph $K_{2p}$, for $p\geq3$, by removing a perfect matching. It is clear that $\chi_{\times2}(\overline H)=1$. On the other hand, we observe that every three vertices of $H$ belong to the closed neighborhood of some vertex in $H$. Therefore, $L_{2}(H)=2$. This in particular shows that $\chi_{\times2}(H)=p$. In fact, we have $\chi_{\times2}(H)+\chi_{\times2}(\overline H)=p+1=(|V(H)|+2)/2$. So, the bound is sharp.
\end{proof}

\subsection{$2$-limited packings in lexicographic product graphs}

We recall that the \textit{lexicographic product graph} $G\circ H$ of two graphs $G$ and $H$ is a graph with vertex set $V(G)\times V(H)$ in which two vertices $(g,h)$ and $(g',h')$ are adjacent if either ``$gg'\in E(G)$" or ``$g=g'$ and $hh'\in E(H)$" (see \cite{ImKl} for a comprehensive overview of graph products).

\begin{theorem}\label{lex1}
Let $G$ be a connected graph of order $n\geq2$ and $H$ be an arbitrary graph of order at least two. Then,
$$2\rho(G)\leq L_{2}(G\circ H)\leq2\big{(}n-\Delta(G)\big{)}.$$
\end{theorem}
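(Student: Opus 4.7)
The plan is to use the standard description of the closed neighborhood
$N_{G\circ H}[(g,h)]=(\{g\}\times N_H[h])\cup(N_G(g)\times V(H))$
and handle the two inequalities independently.

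For the lower bound, I would take a maximum packing $P=\{v_1,\ldots,v_{\rho(G)}\}$ of $G$, pick any two distinct vertices $h_1,h_2\in V(H)$ (available since $|V(H)|\geq 2$), and show that $B=P\times\{h_1,h_2\}$ is a $2$LP set of $G\circ H$. The key point is that an element $(v_i,h_j)\in B$ lies in $N_{G\circ H}[(g,h)]$ only when $v_i\in N_G[g]$, and the packing property of $P$ forces at most one such $v_i$; thus $|B\cap N_{G\circ H}[(g,h)]|\leq 2$, which gives $L_2(G\circ H)\geq|B|=2\rho(G)$.

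For the upper bound, let $B$ be an $L_2(G\circ H)$-set, write $B_g=\{h\in V(H):(g,h)\in B\}$, and fix $v$ with $\deg(v)=\Delta(G)$. A preliminary observation is that whenever $g$ has a neighbor $g'$ in $G$, the whole fiber $\{g\}\times V(H)$ sits inside $N_{G\circ H}[(g',h)]$, so the $2$LP condition at $(g',h)$ yields $|B_g|\leq 2$; connectedness of $G$ together with $n\geq 2$ makes this bound apply to every vertex. In particular, the $n-1-\Delta(G)$ vertices outside $N_G[v]$ contribute at most $2(n-1-\Delta(G))$ to $|B|$.

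The remaining task is to bound $|B_v|+\sum_{g'\in N_G(v)}|B_{g'}|$ by $2$. The inequality $|B_v\cap N_H[h]|+\sum_{g'\in N_G(v)}|B_{g'}|\leq 2$, obtained from the $2$LP condition at $(v,h)$, handles $|B_v|\in\{0,1\}$ after choosing $h\in B_v$ in the nontrivial case. The principal obstacle is the case $|B_v|=2$ when the two elements of $B_v$ are non-adjacent in $H$, since then the above inequality only gives $\sum_{g'\in N_G(v)}|B_{g'}|\leq 1$, leaving a deficit of one. I would overcome this by shifting focus to a neighbor $g'\in N_G(v)$: because $B_v\subseteq N_{G\circ H}[(g',h)]$ already exhausts the $2$LP budget at $(g',h)$, the condition forces $B_{g'}=\emptyset$ for every $g'\in N_G(v)$. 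Summing the contributions then gives $|B|\leq 2+2(n-1-\Delta(G))=2(n-\Delta(G))$, as required.
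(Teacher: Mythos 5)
Your proof is correct and follows essentially the same route as the paper: the lower bound uses the same witness $P\times\{h_1,h_2\}$, and the upper bound rests on the same two facts, namely that every fiber $\{g\}\times V(H)$ meets $B$ in at most two vertices and that $N_G[v]\times V(H)$ (for $v$ of maximum degree) meets $B$ in at most two vertices. Your fiber-by-fiber verification of the latter claim (via the sets $B_g$ and the neighbor-shift in the case $|B_v|=2$) is slightly different bookkeeping from the paper's case analysis on three hypothetical vertices in $N_G[v]\times V(H)$, but the decomposition and the final count $2+2(n-\Delta(G)-1)$ are identical.
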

\begin{proof}
Let $B$ be an $L_{2}(G\circ H)$-set. Consider an arbitrary vertex $v\in V(G)$. Since $G$ has no isolated vertices, there is a vertex $u$ adjacent to $v$. Because $B$ is a $2$LP set in $G\circ H$ and since the vertex $(u,h)$ is adjacent to all vertices in $\{v\}\times V(H)$ for any vertex $h\in V(H)$, it follows that $|B\cap\big{(}\{v\}\times V(H)\big{)}|\leq2$. 

On the other hand, let $w$ be a vertex of maximum degree in $G$ and set $S=N_{G}[w]\times V(H)$. Suppose that $|B\cap S|\geq3$ and that $(g_1,h_1),(g_2,h_2),(g_3,h_3)\in B\cap S$. If $g_1=g_2=g_3=w$, then $(g_1,h_1),(g_2,h_2),(g_3,h_3)\in N_{G\circ H}\big{(}(v,h)\big{)}\cap S$ for each $v\in N_{G}(w)$ and $h\in V(H)$. If only one of $g_1,g_2,g_3$, say $g_1$, is different from $w$, then $(g_1,h_1),(g_2,h_2),(g_3,h_3)\in N_{G\circ H}[(g_1,h_1)]\cap B$. If exactly two of $g_1,g_2,g_3$, say $g_1$ and $g_2$, are different from $w$, we have $(g_1,h_1),(g_2,h_2),(g_3,h_3)\in N_{G\circ H}[(g_3,h_3)]\cap B$. Finally, if $w\neq g_1,g_2,g_3$, then $(g_1,h_1),(g_2,h_2),(g_3,h_3)\in N_{G\circ H}\big{(}(w,h_1)\big{)}\cap B$. In each case, we have a contradiction with the fact that $B$ is a $2$LP set in $G\circ H$. Therefore, $|B\cap S|\leq2$. The above arguments guarantee that
$$L_{2}(G\circ H)=|B|=|B\cap S|+|B\cap(V(G\circ H)\setminus S)|\leq2+2(n-\Delta(G)-1)=2\big{(}n-\Delta(G)\big{)}.$$

Suppose now that $P$ is a $\rho(G)$-set and let $h_1,h_2\in V(H)$. We set $B=P\times \{h_1,h_2\}$. Since the distsnce between any two vertices of $P$ in $G$ is at least three and because $B$ contains two vertices from $\{v\}\times V(H)$ for each $v\in P$, we deduce that $|N_{G\circ H}[(v,h)]\cap B|\leq2$ for each $v\in P$ and $h\in V(H)$. Suppose that $(v_1,h_1),(v_2,h_2),(v_3,h_3)\in N_{G\circ H}\big{(}(x,h)\big{)}\cap B$ for some $x\in V(G)\setminus P$ and $h\in V(H)$. By taking the adjacency rules of lexicographic graph products and the definitions of $P$ and $B$, we may assume that $v_1\neq v_2$ and that $x$ is adjacent to both $v_1$ and $v_2$. Hence, $d_{G}(v_1,v_2)\leq2$. This contradicts the fact that $P$ is a packing set in $G$. Consequently, $|N_{G\circ H}\big{(}(x,h)\big{)}\cap B|\leq2$ for all $x\in V(G)\setminus P$ and $h\in V(H)$. In fact, we have proved that $B$ is a $2$LP set in $G\circ H$. Therefore, $L_{2}(G\circ H)\geq|B|=2\rho(G)$.  
\end{proof}

Mojdeh et al. \cite{msv} showed that $\rho(G)\leq n-\Delta(G)$ for all graphs $G$. Moreover, they proved that the family $\Gamma$, satisfying the following conditions, characterizes all graphs attaining this bound. $(i)$ There exists a vertex $v\in N(u)$, of a vertex $u$ of the maximum degree, such that $N[v]\subseteq N[u]$, $(ii)$ the subset $V(G)\setminus N[u]$ is independent, and $(iii)$ every vertex in $N[u]\setminus N[v]$ has at most one neighbor in $V(G)\setminus N[u]$. In fact, both lower and upper bounds in Theorem \ref{lex1} are sharp for all graphs $G$ in $\Gamma$ and every nontrivial graph $H$, simultaneously. Moreover, the bounds given in Theorem \ref{lex1} can be considered as an improvement of the bound given in \cite{msv}.

We end this section with bounding the $2$LP partition number of lexicographic product graphs.

\begin{theorem}\label{lex2}
Let $G$ and $H$ be two graphs such that $\delta(G)\geq 1$. Then, 
$$\lceil \frac{(\Delta(G)+1)|V(H)|}{2}\rceil\leq \chi_{\times2}(G\circ H)\leq \chi_{\times2}(G)|V(H)|.$$
Moreover, these bounds are sharp.
\end{theorem}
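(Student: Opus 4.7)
The plan is to prove the two inequalities separately: the upper bound by a direct fibre-style refinement of an optimal $2$LP partition of $G$, and the lower bound by a sharper counting argument over the ``star set'' $N_G[v]\times V(H)$ than (\ref{Delta}) alone delivers. I close by exhibiting a small family realizing equality in both bounds.

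For the upper bound, let $\{Q_1,\dots,Q_s\}$ with $s=\chi_{\times 2}(G)$ be a $\chi_{\times 2}(G)$-partition of $G$, and set $R_{j,h}=Q_j\times\{h\}$ for $1\le j\le s$ and $h\in V(H)$. These $s|V(H)|$ sets clearly partition $V(G\circ H)$, so it remains to check each is a $2$LP set of $G\circ H$. For an arbitrary vertex $(g,h')$, the splitting $N_{G\circ H}[(g,h')]=(\{g\}\times N_H[h'])\cup(N_G(g)\times V(H))$ yields
$$|N_{G\circ H}[(g,h')]\cap R_{j,h}|=\mathbf{1}[g\in Q_j\text{ and }h\in N_H[h']]+|N_G(g)\cap Q_j|,$$
and a brief case split on whether $g\in Q_j$, combined with $|N_G[g]\cap Q_j|\le 2$, bounds this quantity by $2$.

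For the lower bound, fix a vertex $v$ of $G$ of maximum degree and any $2$LP partition $\mathbb{P}=\{P_1,\dots,P_p\}$ of $G\circ H$. The crucial step is the claim
$$|P_i\cap (N_G[v]\times V(H))|\le 2\qquad\text{for every }1\le i\le p,$$
since summing over $i$ then gives $(\Delta(G)+1)|V(H)|\le 2p$. I would prove the claim by contradiction: assume three distinct $(w_1,h_1),(w_2,h_2),(w_3,h_3)\in P_i$ with each $w_j\in N_G[v]$, and split on how many $w_j$ equal $v$. If at most one does, the closed neighborhood of $(v,h_j)$ (with $h_j$ the exceptional index, or any $h'$ if none) contains $N_G(v)\times V(H)$ together with $(v,h_j)$ itself, hence all three points. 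If two or three $w_j$ equal $v$, pick $u\in N_G(v)$ (which exists because $\delta(G)\ge 1$)---specifically $u$ equal to the unique $w_j\ne v$ and $h=h_j$ in the two-$v$ case, or arbitrary $u,h$ in the three-$v$ case---so that $N_{G\circ H}[(u,h)]$ contains $\{v\}\times V(H)$ and $(u,h)$, again capturing all three. In every case, $|P_i\cap N_{G\circ H}[\cdot]|\ge 3$ contradicts the $2$LP condition on $P_i$.

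The main obstacle is this case analysis for the lower bound: the inequality (\ref{Delta}) alone delivers only $\lceil(\Delta(G)|V(H)|+\Delta(H)+1)/2\rceil$, which is strictly smaller than the target whenever $H$ lacks a dominating vertex, so one really has to aggregate constraints coming from several distinct closed neighborhoods of $G\circ H$; the hypothesis $\delta(G)\ge 1$ enters precisely to supply the auxiliary neighbor $u\in N_G(v)$ in the last two subcases. For the sharpness claim, I would take $G=K_2$ and $H=K_n$: then $G\circ H=K_{2n}$ has $\chi_{\times 2}(K_{2n})=n=(\Delta(G)+1)|V(H)|/2=\chi_{\times 2}(G)|V(H)|$, so both bounds are simultaneously tight; taking $H=K_1$ with $G$ any tree (via Theorem~\ref{Tree}) furnishes a second extremal family.
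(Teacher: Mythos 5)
Your proposal is correct and follows essentially the same route as the paper: the upper bound via the fibre partition $\{Q_j\times\{h\}\}$ of an optimal $2$LP partition of $G$, and the lower bound via the key claim that every $2$LP set of $G\circ H$ meets $N_G[v]\times V(H)$ in at most two vertices (which the paper establishes by the same case analysis inside the proof of Theorem~\ref{lex1} and then invokes here). Your sharpness example $G=K_2$, $H=K_n$ is a special case of the paper's family (trees with odd maximum degree together with arbitrary $H$), and your verification of the $2$LP property of the fibres via the disjoint splitting of $N_{G\circ H}[(g,h')]$ is a slightly cleaner rendering of the paper's contradiction argument.
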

\begin{proof}
Let $v$ be a vertex of maximum degree in $G$. As shown in the proof of Theorem \ref{lex1}, every $2$LP set of $G\circ H$ has at most two vertices of $N_G[v]\times V(H)$. Therefore, every $2$LP partition of $G\circ H$ has at least $\lceil |N_G[v]\times V(H)|/2 \rceil$ members. Hence, $\chi_{\times2}(G)\geq \lceil(\Delta(G)+1)|V(H)|/2\rceil$.

We now prove the upper bound. Let $V(H)=\{u_1,\cdots,u_{n'}\}$ and let $\{B_1,\cdots,B_{\chi_{\times2}(G)}\}$ be a $2$LP partition of $G$. Consider the following sets
$$B_i\times \{u_1\},B_i\times \{u_2\},\cdots,B_i\times \{u_{n'}\}$$
for $1\leq i\leq \chi_{\times2}(G)$. It is clear that they give a partition of $V(G\circ H)$. Let $(v_1,u_k),(v_2,u_k),(v_3,u_k)\in N_{G\circ H}[(v,u_s)]\bigcap(B_i\times \{u_k\})$ for some $v\in V(G)$ and $1\leq s\leq n'$. This implies that $v_1,v_2,v_3 \in N_{G}[v]\cap B_i$, which contradicts the fact that $B_i$ is a $2$LP set in $G$. Therefore, $B_i\times \{u_j\}$ is a $2$LP set in $G\circ H$ for every $1\leq i\leq \chi_{\times2}(G)$ and $1\leq j\leq n'$. Hence $\chi_{\times 2}(G\circ H)\leq \chi_{\times 2}(G)|V(H)|$.

In view of Theorem \ref{Tree}, both lower and upper bounds are sharp for each tree with odd maximum degree and all graphs $H$.
\end{proof}


\section{On $2$-total limited packings}

We consider the problem of deciding whether a graph $G$ has a $2$TLP set of cardinality at least a given integer. That is stated in the following decision problem.

$$\begin{tabular}{|l|}
\hline
\mbox{$2$-TOTAL LIMITED PACKING problem ($2$TLP problem)}\\
\mbox{INSTANCE: A graph $G$ and a positive integer $k'$.}\\
\mbox{QUESTION: Is there a $2$TLP set in $G$ of cardinality at least $k'$?}\\
\hline
\end{tabular}$$

We make use of the following decision problem which is known to be NP-complete for bipartite graphs and for chordal graphs (see \cite{hs}).

$$\begin{tabular}{|l|}
\hline
\mbox{OPEN PACKING problem (OP problem)}\\
\mbox{INSTANCE: A graph $G$ of order $n$ and a positive integer $k$.}\\
\mbox{QUESTION: Is there an open packing in $G$ of cardinality at least $k$?}\\
\hline
\end{tabular}$$

\begin{theorem}	
The $2$TLP problem is NP-complete even for bipartite graphs and for chordal graphs.
\end{theorem}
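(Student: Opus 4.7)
I would establish NP-completeness in two parts: membership in NP and a polynomial reduction from an NP-complete problem that preserves the two graph classes.

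Membership in NP is immediate: a candidate $B\subseteq V(G)$ of cardinality at least $k'$ is a valid witness, since verifying $|B\cap N(v)|\leq 2$ for every $v\in V(G)$ takes polynomial time.

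For NP-hardness, I would reduce from the OP problem, which by \cite{hs} is NP-complete for bipartite graphs and for chordal graphs. Given an instance $(G,k)$ of OP with $G$ bipartite (respectively chordal), the plan is to build a new graph $G'$ by attaching to each vertex $v\in V(G)$ a small gadget, chosen so that the class of $G$ is preserved and so that the 2TLP condition at $v$ effectively enforces the open packing condition. A natural first attempt uses a single pendant leaf $v'$ attached to $v$; this preserves both bipartiteness (extend the bipartition by placing $v'$ on the side opposite to $v$) and chordality (a pendant cannot complete any induced cycle of length at least four). Set $k'=k+n$ with $n=|V(G)|$.

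The forward implication is straightforward: given an open packing $O$ of $G$ with $|O|\geq k$, the set $B=O\cup\{v':v\in V(G)\}$ is a 2TLP of $G'$, since $|B\cap N_{G'}(v)|=|O\cap N_G(v)|+1\leq 2$ for each $v\in V(G)$ and $|B\cap N_{G'}(v')|\leq 1$; its cardinality is $|O|+n\geq k'$. The reverse implication is the substantive step. Given a 2TLP $B$ in $G'$ with $|B|\geq k'$, the plan is to argue via an exchange that $B$ may be replaced by a 2TLP $B^{*}$ of the same cardinality that contains every pendant. Once every $v'$ belongs to $B^{*}$, the 2TLP constraint $|B^{*}\cap N_{G'}(v)|\leq 2$ forces $|B^{*}\cap N_G(v)|\leq 1$ at every $v$, so $O:=B^{*}\cap V(G)$ is an open packing of $G$ with $|O|=|B^{*}|-n\geq k$.

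The main obstacle, and the step where I would focus the bulk of the technical work, is this exchange argument. A maximum 2TLP in $G'$ need not contain every pendant: a vertex $v\in V(G)$ with two neighbors already in $B\cap V(G)$ cannot admit $v'$, and a naive swap may decrease the cardinality. If the plain pendant gadget turns out to be too weak to force the exchange, which can occur on small examples in which the maximum 2TLP of $G'$ strictly exceeds $\rho_o(G)+n$, I would strengthen the gadget, for instance by attaching two pendant leaves per vertex or a short pendant path, and then adjust $k'$ to match. Pendant trees preserve both bipartiteness and chordality, so such refinements remain within the required classes; the remaining work is selecting the gadget precisely and completing the exchange argument so that the equivalence is exact and polynomial in the size of $(G,k)$.
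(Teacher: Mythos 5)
Your reduction is exactly the one the paper uses: the corona $G'=G\odot K_1$ (one pendant per vertex), threshold $k'=k+n$, and the claim $L_{2,t}(G')=\rho_o(G)+n$. The one step you leave open --- the exchange argument forcing all pendants into a maximum $2$TLP --- is precisely the step the paper carries out, and your worry that the single-pendant gadget is too weak is unfounded; no strengthened gadget is needed.

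Here is how the exchange closes. Let $B$ be a \emph{maximum} $2$TLP of $G'$ and suppose the pendant $u_i$ attached to $v_i$ is not in $B$. If $|N_G(v_i)\cap B|\leq 1$, then $B\cup\{u_i\}$ is still a $2$TLP (adding $u_i$ raises the count only at $v_i$, where it becomes at most $2$, and $u_i$'s own open neighborhood is $\{v_i\}$), contradicting maximality. So $|N_G(v_i)\cap B|=2$; pick $v_j\in N_G(v_i)\cap B$ and set $B''=(B\setminus\{v_j\})\cup\{u_i\}$. The count at $v_i$ stays at $2$, the count at every other vertex does not increase (it can only lose $v_j$, and $u_i$ lies in no other open neighborhood), and $|B''|=|B|$. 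Each such swap strictly increases the number of pendants in the set while removing only non-pendant vertices, so after at most $n$ iterations every $u_i$ is in the set. At that point $|N_{G'}(v_i)\cap B|\leq 2$ together with $u_i\in N_{G'}(v_i)\cap B$ forces $|N_G(v_i)\cap B|\leq 1$ for every $i$, so $B\cap V(G)$ is an open packing of $G$ of size $|B|-n$, giving $L_{2,t}(G')\leq\rho_o(G)+n$ and hence equality. With this supplied, your argument is complete and coincides with the paper's.
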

\begin{proof}
The $2$TLP problem is a member of NP because checking that a set $S$ of vertices is a $2$TLP of cardinality at least $k$ can be done in polynomial time.

In what follows, we show how a polynomial time algorithm for the OP problem could be used to solve the $2$TLP problem in polynomial time. Let $G$ with $V(G)=\{v_{1},\cdots,v_{n}\}$ be a graph as an instance of the OP problem. We set $G'=G\odot K_1$, that is, a graph obtained from $G$ by joining a new vertex $u_{i}$ to $v_{i}$ for each $1\leq i\leq n$, and set $k'=n+k$. Let $B$ be a $\rho_{o}(G)$-set. It is readily seen by the construction that $B'=B\cup \{u_1,\cdots,u_n\}$ is a $2$TLP set of the graph $G'$. So, $L_{2,t}(G')\geq |B'|=\rho_{o}(G)+n$.

Suppose, conversely, that $B'$ be an $L_{2,t}(G')$-set. If we have $u_i\notin B'$ for some $1\leq i\leq n$ and $|N_{G}(v_i)\cap B'|\leq1$, then $B'\cup \{u_i\}$ is a $2$TLP set in $G'$, which contradicts the maximality of $B'$. Hence, if $u_i\notin B'$, then $|N_G(v_i)\cap B'|=2$. Let $v_j \in N_G(v_i)\cap B'$. Then, it can be easily checked that $B''=(B'\backslash\{v_j\})\cup \{u_i\}$ is an $L_{2,t}(G')$-set containing $u_i$. Therefore, without loss of generality, we may assume that $\{u_1,\cdots,u_n\}\subseteq B'$. With this in mind, it is now clear that $S=B'\setminus \{u_1,\cdots,u_n\}$ is an open packing in $G$. So, $|S|=|B'|-n\leq \rho_o(G)$ and we have that $L_{2,t}(G')\leq\rho_o(G)+n$.

We now get from $L_{2,t}(G')=\rho_o(G)+n$ that $L_{2,t}(G')\geq k'$ if and only if $\rho_o(G)\geq k$. It is known from \cite{hs} that the OP problem is NP-complete even for bipartite graphs and for chordal graphs. By the construction, if $G$ is a bipartite (resp. chordal) graph, then $G\odot k_1$ is also a bipartite (resp. chordal) graph. Consequently, the $2$TLP problem is NP-complete even for bipartite graphs and for chordal graphs.
\end{proof}  

From the result above, we conclude that the problem of computing the $2$TLP number is NP-hard, even for some special families of graphs. Consequently, it would be desirable to bound this parameter in terms of several invarients of graphs. Several bounds on the $2$TLP number were given in \cite{hms}. We here prove that this parameter of a graph $G$, except for a very especial case, can be bounded from below just in terms of the girth of $G$.

Let $\Lambda$ be the family of graphs in which every three vertices has a common neighbor. Evidently, $2\leq L_{2,t}(G)\leq|V(G)|$ holds for all nontrivial graphs $G$. Moreover, equality holds in the lower bound (resp. upper bound) if and only if $G\in \Lambda$ (resp. $\Delta(G)\leq2$).

\begin{theorem}\label{girth}
For any graph $G\notin \Lambda$ with a cycle, $L_{2,t}(G)\geq g(G)$. This bound is sharp. 
\end{theorem}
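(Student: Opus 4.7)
My plan is to exhibit an explicit $2$TLP set of size $g = g(G)$. When $g \geq 4$ the natural candidate is the vertex set of any shortest cycle $C$ of $G$; when $g = 3$ this choice can fail, and I will instead use the hypothesis $G \notin \Lambda$ to produce three vertices with no common neighbour.

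Assume first that $g \geq 4$ and fix a shortest cycle $C : v_{1} v_{2} \cdots v_{g} v_{1}$. Minimality of $g$ forbids chords in $C$, so each $v_{i}$ has exactly its two cycle-neighbours in $V(C)$, giving $|V(C) \cap N(v_{i})| = 2$. For an external vertex $u \notin V(C)$ with two distinct neighbours $v_{i}, v_{j}$ on $C$ ($i < j$), the two-edge path $v_{i} u v_{j}$ combines with each of the two arcs of $C$ between $v_{i}$ and $v_{j}$ to produce cycles of lengths $(j - i) + 2$ and $g - (j - i) + 2$. Both must be at least $g$, which forces $j - i = 2$ and $g \leq 4$. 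Hence for $g \geq 5$ every external $u$ has at most one neighbour on $C$; for $g = 4$ the only admissible neighbour pairs are the diagonals $\{v_{1}, v_{3}\}$ and $\{v_{2}, v_{4}\}$, and any $u$ meeting both diagonals would inherit an adjacent pair on $C$, contradicting $g = 4$. In either case $|V(C) \cap N(w)| \leq 2$ for every $w \in V(G)$, so $V(C)$ is a $2$TLP set and $L_{2,t}(G) \geq g$.

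For $g = 3$ the triangle alone may fail to be a $2$TLP (for example in $K_{4}$, where an external vertex is adjacent to all three triangle vertices), and this is precisely where the hypothesis enters: because $G \notin \Lambda$, there exist three vertices $x, y, z$ of $G$ with no common neighbour, so $|\{x, y, z\} \cap N(w)| \leq 2$ for every $w \in V(G)$. Consequently $\{x, y, z\}$ is a $2$TLP set and $L_{2,t}(G) \geq 3 = g(G)$. For sharpness I would point to any cycle $C_{n}$ with $n \geq 3$: it is $2$-regular, so $V(C_{n})$ is trivially a $2$TLP, and $C_{n} \notin \Lambda$, giving $L_{2,t}(C_{n}) = n = g(C_{n})$.

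The main delicate step is the cycle-length counting in the $g = 4$ case, where external vertices can genuinely contribute two neighbours on $C$; the count above shows this remains within the $2$TLP budget. Beyond that, the hypothesis $G \notin \Lambda$ is used only, but essentially, to handle the triangle case.
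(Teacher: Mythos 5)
Your proposal is correct and follows essentially the same approach as the paper: the $g(G)=3$ case is dispatched by the hypothesis $G\notin\Lambda$, and for $g(G)\geq4$ the vertex set of a shortest cycle is shown to be a $2$TLP set via a cycle-length count bounding the neighbours an external vertex can have on it. Your counting is organized slightly differently (ruling out two neighbours for $g\geq5$ and forcing diagonal pairs for $g=4$, rather than directly refuting three neighbours) and your sharpness example $C_n$ is simpler than the paper's, but these are only cosmetic differences.
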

\begin{proof}
Since $G\notin \Lambda$, it follows that the lower bound holds for $g(G)=3$. So, we assume that $g(G)\geq4$. Let $C:v_{1}v_{2}\cdots v_{g(G)}v_{1}$ be a shortest cycle in $G$ and let $S=V(C)$. Clearly, every vertex in $S$ has exactly two neighbors in $S$. Suppose that there exists a vertex $u\notin S$ with three neighbors $v_{i},v_{j},v_{k}\in S$, in which $1\leq i<j<k\leq g(G)$. Because $g(G)\geq4$ and $v_{i},v_{j},v_{k}\in N(u)$, we have $j-i,k-j\geq2$. Without loss of generality, we can assume that $k-j\geq j-i$. Consider the cycle $C'$ with the vertices from the shortest $v_{i},v_{j}$-path on $C$ along with $u$. Since $C$ is a shortest cycle in $G$, we have $|V(C)|\leq|V(C')|=j-i+2$. So, $|V(C)|-(j-i)\leq2$. Since $k-j\geq2$, this necessarily implies that $v_{i}=v_{k}$, a contradiction. In fact, we have proved that every vertex of $G$ has at most two neighbors in $S$. Therefore, $L_{2,t}(G)\geq|S|=g(G)$.

The bound is sharp for all graphs $G$ obtained from a cycle $C:v_{1}v_{2}\cdots v_{|V(C)|}v_{1}$ by joining some vertex $v_{i}$ to a pendant vertex $u_{i}$ provided that $|i-j|\geq3$ for all such vertices $v_{i}$ and $v_{j}$. Moreover, all complete bipartite graphs $K_{m,n}$ for $m,n\geq2$, with $L_{2,t}(K_{m,n})=g(K_{m,n})=4$, attain the lower bound. 
\end{proof}

Let $B$ and $\overline{B}$ be an $L_{2,t}(G)$-set and an $L_{2,t}(\overline{G})$-set, respectively. If $|B\cap \overline{B}|\geq6$, then there exists a vertex $v\in B\cap \overline{B}$ such that $|N_{G}(v)\cap B|\geq 3$ or $|N_{\overline{G}}(v)\cap \overline{B}|\geq 3$, which is impossible. Therefore, $n\geq|B\cup \overline{B}|=|B|+|\overline{B}|-|B\cap \overline{B}|\geq|B|+|\overline{B}|-5$. This shows that $L_{2,t}(G)+L_{2,t}(\overline{G})\leq n+5$. In what follows, we prove that the cycle $C_5$ is the only graph for which the equality holds in this Nordhaus-Gaddum inequality. Note that it can be considered as a total version of the inequality (\ref{B}) concerning the $2$LP numbers, which was proved by induction on the order of graph. However, we give a non-inductive proof here.

\begin{theorem}
Let $G$ be a graph of order $n$ different from $C_5$. Then, $L_{2,t}(G)+L_{2,t}(\overline{G})\leq n+4$.
\end{theorem}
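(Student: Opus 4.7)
The plan is to run the argument from the paragraph preceding the theorem under the assumption of equality, and extract enough structural information to pin down the extremal graph as $C_5$. Assume for contradiction that $L_{2,t}(G)+L_{2,t}(\overline{G}) \geq n+5$. Taking $B$ and $\overline{B}$ as $L_{2,t}(G)$- and $L_{2,t}(\overline{G})$-sets, the displayed chain $n \geq |B \cup \overline{B}| = |B|+|\overline{B}|-|B \cap \overline{B}| \geq |B|+|\overline{B}|-5$ must hold with equality throughout. Hence $|B \cap \overline{B}|=5$ and $B \cup \overline{B}=V(G)$.

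Set $S := B \cap \overline{B}$, so $|S|=5$. For any $v \in S$, the fact that $v \in B$ and $B$ is a $2$TLP in $G$ yields $|N_G(v) \cap S| \leq |N_G(v) \cap B| \leq 2$, and analogously $|N_{\overline G}(v) \cap S| \leq 2$. But $v$ has exactly $4$ other vertices of $S$ to see, split between $G$ and $\overline G$, so $|N_G(v)\cap S| + |N_{\overline G}(v) \cap S| = 4$, forcing both to equal $2$. Thus $G[S]$ is a $2$-regular graph on $5$ vertices, i.e., $G[S] \cong C_5$.

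Next I would rule out any vertex outside $S$. Suppose $v \in B \setminus \overline{B}$. Since $v \in B$, $|N_G(v) \cap B| \leq 2$, giving $|N_G(v)\cap S|\leq 2$. Independent of whether $v \in \overline{B}$, the fact that $\overline{B}$ is a $2$TLP in $\overline{G}$ applies at the vertex $v$ too, giving $|N_{\overline G}(v) \cap \overline{B}| \leq 2$, hence $|N_{\overline G}(v) \cap S| \leq 2$. But $v \notin S$ has all $5$ vertices of $S$ to see between $G$ and $\overline G$, contradicting $|N_G(v)\cap S|+|N_{\overline G}(v) \cap S|=5 > 4$. The symmetric argument eliminates $\overline{B} \setminus B$. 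Therefore $V(G)=B \cup \overline{B} = S$, whence $n=5$ and $G=G[S] \cong C_5$, contradicting the hypothesis $G \neq C_5$.

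The only real subtlety is the outside-vertex step, and even there the counting $|N_G(v)\cap S|+|N_{\overline G}(v)\cap S|=5$ immediately gives the contradiction once one notices that the $2$TLP inequality for $\overline{B}$ is a condition on \emph{every} vertex of $\overline G$, not only on members of $\overline{B}$. Everything else is a tight reading of the equality case in the Nordhaus–Gaddum bound from the preceding paragraph.
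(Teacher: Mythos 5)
Your proof is correct, and it takes a genuinely different route from the paper's. The paper first establishes the vertex-local bound $L_{2,t}(G)\le n-\Delta(G)+2$ (by counting an optimal set inside and outside $N_G[u]$ for a vertex $u$ of maximum degree), adds the analogous bound for $\overline{G}$ to get $n+5+\delta(G)-\Delta(G)$, and then works through the equality case at length: $G$ must be regular, the optimal $2$TLP set must be all of $V(G)$, hence $G$ is $2$-regular, hence a disjoint union of cycles, then a single cycle, then of order at most $5$, and finally $C_3$ and $C_4$ are excluded by inspection. You instead work entirely with the intersection $S=B\cap\overline{B}$, reusing the bound $|B\cap\overline{B}|\le 5$ already proved in the paragraph preceding the theorem: tightness of the chain forces $|S|=5$ and $B\cup\overline{B}=V(G)$; since every pair of distinct vertices is adjacent in exactly one of $G$ and $\overline{G}$, the two $2$TLP conditions give $|N_G(v)\cap S|+|N_{\overline{G}}(v)\cap S|=4$ with both summands at most $2$ when $v\in S$ (so $G[S]$ is $2$-regular on five vertices, i.e.\ $C_5$), while the same count equals $5>2+2$ when $v\notin S$ (so no such $v$ exists). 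Your observation that the $2$TLP inequality constrains \emph{every} vertex, not only members of the packing, is exactly what makes the external-vertex step work, and it in fact renders the split into $B\setminus\overline{B}$ and $\overline{B}\setminus B$ unnecessary, since the same count eliminates any vertex outside $S$. Your argument is shorter, avoids the regularity and cycle-structure analysis entirely, and makes transparent where the constant $5$ comes from; what it does not yield is the paper's intermediate inequality $L_{2,t}(G)\le n-\Delta(G)+2$, which is of independent interest.
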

\begin{proof}  
We first give another proof for $L_{2,t}(G)+L_{2,t}(\overline{G})\leq n+5$. Let $u$ be a vertex of maximum degree in $G$ and $B$ be an $L_{2,t}(G)$-set. Then, $|B\cap N_G[u]|\leq 3$.
On the other hand,
$$|B|=|B\cap N_G[u]|+|B\cap (V(G)\backslash N_G[u])|\leq 3+|V(G)\backslash N_G[u]|=3+n-(\Delta(G)+1)=n-\Delta(G)+2.$$
Hence, $L_{2,t}(G)\leq n-\Delta(G)+2$. By considering the analogous inequality for $\overline{G}$, we have
$$L_{2,t}(G)+L_{2,t}(\overline{G})\leq n-\Delta(G)+2+n-\Delta(\overline G)+2=n+5+\delta (G)-\Delta (G) \leq n+5.$$

Let $G$ be a graph for which $L_{2,t}(G)+L_{2,t}(\overline{G})=n+5$. With this in mind, and taking the latter inequality chain into account, we necessarily get\\
$(i)$ $\delta (G)-\Delta(G)=0$ and hence $G$ is regular,\\
$(ii)$ $L_{2,t}(G)=n-\Delta(G)+2$, and\\
$(iii)$ $L_{2,t}(\overline{G})=\delta(G)+3=\Delta(G)+3$.

Let $P$ be an $L_{2,t}(G)$-set. We have $|V(G)\setminus P|=\Delta(G)-2$ as $|P|=n-\Delta(G)+2$. Therefore, $\Delta(G)=|V(G)\setminus P|+2$. In what follows, we prove that $P=V(G)$. Suppose to the contrary that there is a vertex $x\in V(G)\setminus P$. In particular, $\deg_{G}(x)=|V(G)\setminus P|+2$. Because $x$ has at most two neighbors in $P$, it follows that $x$ has at least $|V(G)\setminus P|$ neighbors outside of $P$, which contradicts the fact that $x\in V(G)\setminus P$. This shows that $|V(G)\setminus P|=0$, and hence $G$ is an $r$-regular graph with $r\in \{1,2\}$. Note that $r\neq1$ since $L_{2,t}(G)=n+1$ (followed from $(ii)$) does not make sense. Thus, $G$ is a union of cycles. Now we prove that $G$ contains only one cycle. Assume for the sake of contradiction that $G$ contains $k\geq2$ cycles $C_{1},C_{2},\cdots,C_{k}$ of orders $n_{1},n_{2},\cdots,n_{k}$, respectively ($n_1+n_2+...+n_k=n$).

Let $\overline{P}$ be an $L_{2,t}(\overline{G})$-set. We observe that $\overline{G}$ contains a subgraph $K_{n_1, n_2, ..., n_k}$ with partite sets $A_{1},A_{2},\cdots,A_{k}$ of cardinalities $n_1,n_2,\cdots,n_{k}$, respectively. Since $\overline{P}$ is a $2$TLP set in $\overline{G}$, it follows that $|\overline{P} \cap A_{i}|\leq 2$ for each $1\leq i\leq k$. If $k=2$, then $L_{2,t}(\overline{G})=\delta(G)+3\leq4$, which is a contradiction. Therefore, $k\geq3$. Since $L_{2,t} (\overline{G})=5$, the distribution of five vertices in the partite sets $A_{1},A_{2},\cdots,A_{k}$ is of the form:\\
$\bullet$ two vertices in $A_{i_1}$, two vertices in $A_{i_2}$ and one vertex in $A_{i_3}$ for some $1\leq i_{1}, i_{2},i_{3}\leq k$; or\\
$\bullet$ two vertices in $A_{i_1}$, and one vertex in each $A_{i_2},A_{i_3},A_{i_4}$ for some $1\leq i_{1},i_{2},i_{3},i_{4}\leq k$; or\\
$\bullet$ one vertex in each partite sets $A_{i_1},\cdots,A_{i_5}$ for some $1\leq i_{1},\cdots,i_{5}\leq k$.\\
In all three cases above, any vertex from $A_{i_1}$ is adjacent to at least three vertices in $P\cap(\cup_{j\neq i_1} A_{j})$, which is a contradiction. Consequently, $k=1$. This shows that $G$ is isomorphic to $C_n$, and hence $L_{2,t}(G)=n$.

We now prove that the order of $G$ is not greater than five. Suppose to the contrary that $n\geq 6$. Since $\overline{G}$ is $(n-3)$-regular (with $n-3\geq3$), there exists a vertex $x\in V(\overline{G})\setminus \overline{P}$. Together $|\overline{P}|=5$, $|N_{\overline{G}}[x]|=n-2$ and that $\overline{P}$ in a $2$TLP set of $\overline{G}$ imply that $|\overline{P}\cap N_{\overline {G}}[x]|=3$ and $|\overline{P}\cap(V(\overline {G})\backslash N_{\overline{G}}[x])|=2$.
In particular, $x$ must be in $\overline{P}$ (for otherwise $|\overline{P}\cap N_{\overline {G}}(x)|=3>2$, which is impossible). This contradiction shows that $n\leq5$. On the other hand, it is readily seen that $L_{2,t}(G)+L_{2,t}(\overline{G})<n+5$ when $G\in\{C_{3},C_{4}\}$. Therefore, $G\cong C_5$.
\end{proof}

It is clear from the definitions that $L_{2,t}(G)\geq L_{2}(G)$ for all graphs $G$. In what follows, we prove that the difference between these two parameters for each tree can be bounded from above by one-third its order. Note that this does not hold in general. In fact, $L_{2,t}(G)-L_{2}(G)-|V(G)|/3$ can be arbitrarily large. To see this, consider the path $P:u_{1}u_{2}\cdots u_{3p}$ with $p\geq1$. For each $1\leq i\leq3p$, we add two $4$-cycles $C_{4,x}^{i}:x_{i1}x_{i2}x_{i3}x_{i4}x_{i1}$ and $C_{4,y}^{i}:y_{i1}y_{i2}y_{i3}y_{i4}y_{i1}$ and join the vertex $u_{i}$ to $x_{i1}$ and $y_{i1}$. Let $G$ be the resulting graph. It is then easy to see that
\begin{equation*}
S=\cup_{i=1}^{3p}\big{(}V(C_{4,x}^{i})\cup V(C_{4,y}^{i})\big{)} \ \mbox{and} \ S'=\{u_{3i-2},u_{3i-1}\}_{i=1}^{p}\cup\big{(}\cup_{i=1}^{3p}\{x_{i2},x_{i3},y_{i2},y_{i3}\}\big{)}
\end{equation*}
are an $L_{2,t}(G)$-set and an $L_{2}(G)$-set, respectively. Then, $L_{2,t}(G)-L_{2}(G)=|S|-|S'|=10p=|V(G)|/3+p$. 

In order to prove the above-mentioned result concerning trees, we need the following simple but useful lemma.

\begin{lemma}\label{leaves}
Given any tree $T$ on at least two vertices, there exists an $L_{2}(T)$-set \emph{(}$L_{2,t}(T)$-set\emph{)} containing the leaf adjacent to every weak support vertex and two leaves adjacent to every strong support vertex of $T$.
\end{lemma}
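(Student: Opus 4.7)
The plan is to prove both parts of the lemma in parallel by a single swap argument. Let $L$ denote the set of leaves of $T$, and let $B^{*}$ be an $L_{2}(T)$-set (resp.\ $L_{2,t}(T)$-set) chosen so as to additionally maximise $|B^{*}\cap L|$ among all such optimal sets. I will argue by contradiction that $B^{*}$ automatically enjoys the property demanded in the statement. The crucial observation driving everything is that any leaf of $T$ lying in $N_{T}[v]$ for a support vertex $v$ must itself be a leaf of $v$, since its unique neighbour is $v$; consequently, every element of $N[v]$ that is not a leaf of $v$ fails to be a leaf of $T$ at all.

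Suppose the property fails at some support vertex $v$ with leaves $\ell_{1},\ldots,\ell_{k}$, and set $m:=|B^{*}\cap\{\ell_{1},\ldots,\ell_{k}\}|<\min\{k,2\}$, so that $t:=\min\{k,2\}-m\in\{1,2\}$ leaves of $v$ are still missing from $B^{*}$. Using the maximality of $|B^{*}|$ together with the fact that adding a leaf of $v$ only perturbs the closed (resp.\ open) neighbourhood intersections at $v$ and at the added leaf itself, I would first show $|B^{*}\cap N[v]|=2$ in the $L_{2}$ setting (and $|B^{*}\cap N(v)|=2$ in the $L_{2,t}$ setting). Indeed, if this count were at most $1$, one could append $t$ of the missing leaves of $v$ to $B^{*}$ without creating any violation, strictly enlarging $B^{*}$ and contradicting optimality.

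Now I perform the swap. Since $m\leq 1$, the set $B^{*}\cap N[v]$ contains at least $2-m\geq t$ vertices that are not leaves of $v$; choose any $t$ of them as $R$, and let $A$ be any $t$ leaves of $v$ not in $B^{*}$. Define $B':=(B^{*}\setminus R)\cup A$. A direct check shows that $B'$ is still an $L_{2}(T)$-set of the same cardinality as $B^{*}$: we have $|B'\cap N[v]|=2$ by the count, $|B'\cap N[\ell]|\leq|\{v,\ell\}|=2$ for every $\ell\in A$, and every other closed neighbourhood can only have lost elements compared to $B^{*}$. By the key observation $R\cap L=\emptyset$ while $A\subseteq L$, which yields $|B'\cap L|=|B^{*}\cap L|+t>|B^{*}\cap L|$, contradicting the choice of $B^{*}$. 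The $L_{2,t}$ statement follows from the identical recipe after replacing every $N[\cdot]$ by $N(\cdot)$; if anything it is mildly easier because a leaf $\ell$ satisfies $\ell\notin N(\ell)$, which removes the need to track $\ell \in N[\ell]$ in the verification. The only real obstacle is the routine local bookkeeping when checking that $B'$ is a $2$LP (resp.\ $2$TLP) set, which reduces entirely to the observation that leaves of $v$ only affect neighbourhoods within $\{v\}\cup\{\ell_{1},\ldots,\ell_{k}\}$.
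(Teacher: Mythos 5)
Your exchange argument is correct, and it is worth noting that the paper itself states Lemma \ref{leaves} without any proof, so there is no "official" argument to compare against; your extremal choice of an optimal set maximising the number of leaves it contains, followed by a local swap inside $N[v]$ (resp.\ $N(v)$), is exactly the standard way to establish such a statement, and all the key points are in place: the swap preserves cardinality and feasibility because a leaf of a support vertex $v$ lies only in the closed neighbourhoods of itself and of $v$ (resp.\ only in the open neighbourhood of $v$), and it strictly increases the leaf count, contradicting the extremal choice. One small repair: in justifying $|B^{*}\cap N[v]|=2$, your claim that one may append all $t$ missing leaves when the count is at most $1$ fails in the subcase $m=0$, $t=2$, $|B^{*}\cap N[v]|=1$ (appending two leaves would push the count at $v$ to $3$); appending a single missing leaf already contradicts maximality, which is all you need. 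You should also dispose of $T\cong P_{2}$ separately (there the support vertex is itself a leaf, so your "crucial observation" does not literally apply, but the whole vertex set is the optimal set and the claim is trivial). With those two cosmetic fixes the proof is complete for both the $L_{2}$ and $L_{2,t}$ versions.
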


\begin{theorem}
If $T$ is a tree of order $n$, then $L_{2,t}(T)-L_2(T)\leq \lfloor\frac{n}{3}\rfloor$. Moreover, this bound is sharp.
\end{theorem}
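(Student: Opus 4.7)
The plan is to take an $L_{2,t}(T)$-set $B$ and produce a $2$LP set from it by deleting at most $\lfloor n/3 \rfloor$ vertices. Since $B$ is a $2$TLP set, each $v \in B$ has at most two neighbors in $B$, so $T[B]$ has maximum degree at most $2$; and because $T$ is a tree, $T[B]$ must in fact be a disjoint union of paths, say with vertex counts $k_1, k_2, \ldots, k_r$ summing to $|B|$. This structural observation is the main engine of the argument.

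From each path $u_1 u_2 \cdots u_{k_i}$, I would remove the vertices $u_{3j}$ for $1 \leq j \leq \lfloor k_i/3 \rfloor$. Checking the three residue classes of $k_i$ modulo $3$ shows that exactly $\lfloor k_i/3 \rfloor$ vertices are deleted and that the remaining vertices of the path induce a matching. Let $B'$ denote the resulting set. Since distinct components of $T[B]$ share no edges in $T$, the subgraph $T[B']$ has maximum degree at most $1$. I would then verify that $B'$ is a $2$LP set: for $v \in B'$ the bound $|N[v] \cap B'| \leq 2$ follows from $\deg_{T[B']}(v) \leq 1$, and for $v \notin B'$ we have $|N[v] \cap B'| = |N(v) \cap B'| \leq |N(v) \cap B| \leq 2$ because $B$ is a $2$TLP set. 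Using the inequality $\sum_{i=1}^{r} \lfloor k_i/3 \rfloor \leq \lfloor |B|/3 \rfloor \leq \lfloor n/3 \rfloor$, I would conclude that $L_2(T) \geq |B'| = |B| - \sum_{i=1}^{r} \lfloor k_i/3 \rfloor \geq L_{2,t}(T) - \lfloor n/3 \rfloor$.

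For sharpness I would take $T = P_n$. Clearly $V(P_n)$ itself is a $2$TLP, so $L_{2,t}(P_n) = n$. On the $2$LP side, among any three consecutive vertices of $P_n$ at most two can belong to a $2$LP set (their closed neighborhoods all contain the middle one), whence $L_2(P_n) \leq \lceil 2n/3 \rceil$, and the periodic pattern ``$110110\cdots$'' attains this. A quick check in each residue class of $n$ modulo $3$ yields $L_{2,t}(P_n) - L_2(P_n) = \lfloor n/3 \rfloor$, so the bound is sharp on every path.

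I do not foresee any serious obstacle. The critical step, and arguably the only nontrivial one, is the decomposition of $T[B]$ into paths, which bundles the tree hypothesis with the $2$TLP condition into one convenient form; the subsequent ``delete every third vertex'' construction and the floor inequality are routine. The only small care needed is to verify the three residue cases so that the claimed removal count is tight.
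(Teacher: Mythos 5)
Your proof is correct, and it takes a genuinely different route from the paper. The paper proves the upper bound by induction on $n$, rooting $T$ at one end of a diametral path and running through an extensive case analysis on the degrees of the last few internal vertices, supported by a lemma about optimal sets containing leaves of support vertices. Your argument is direct and non-inductive: you take an optimal $2$TLP set $B$, observe that $T[B]$ is a forest of maximum degree at most $2$ and hence a disjoint union of paths, and delete every third vertex along each path to produce a $2$LP set $B'$ with $|B'|\geq |B|-\lfloor |B|/3\rfloor$. All the verifications go through: $T[B']$ has maximum degree at most $1$, so $|N[v]\cap B'|\leq 2$ for $v\in B'$, while for $v\notin B'$ the $2$TLP condition on $B$ already gives $|N(v)\cap B'|\leq |N(v)\cap B|\leq 2$; and $\sum_i\lfloor k_i/3\rfloor\leq\lfloor |B|/3\rfloor\leq\lfloor n/3\rfloor$. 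This is considerably shorter, it isolates exactly where the tree hypothesis is used (acyclicity of $T[B]$ --- consistent with the paper's counterexample for general graphs, where $G[B]$ consists of $4$-cycles), and it actually yields the slightly stronger bound $L_{2,t}(T)-L_2(T)\leq\lfloor L_{2,t}(T)/3\rfloor$, i.e.\ $L_2(T)\geq\lceil 2L_{2,t}(T)/3\rceil$. Your sharpness example ($P_n$ with the pattern $110110\cdots$, checked in all three residue classes) is also simpler than the paper's three-legged spider and shows the bound is attained for every order $n$. The trade-off is only that the paper's inductive machinery reuses its Lemma on leaves and support vertices, which serves other results in that section; as a self-contained proof of this theorem, yours is preferable.
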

\begin{proof}
We prove the upper bound by induction on the order $n$ of $T$. It can be readily checked that if $T$ is a tree of diameter at most three, then the result is true. So, in what follows, we may assume that diam$(T)\geq4$, which implies that $n\geq5$.

Suppose that the inequality holds for all trees $T'$ of order $n'\geq1$, and let $T$ be a tree of order $n>n'$. Let $P$ be a diametral $r,c$-path in $T$ and root $T$ at $r$. Clearly, both $r$ and $c$ are leaves in $T$. Let $d$ be the parent of $c$.

Note that, by the definitions, at most two leaves of every support vertex of $T$ belong to each $2$LP set ($2$TLP set) in $T$. This implies that if $T$ contains a support vertex adjacent to at least three leaves, then removing one of the leaves leads to a tree $T'$ of order $n'=n-1$ with $L_{2,t}(T')=L_{2,t}(T)$ and $L_2(T')=L_2(T)$. So, by the induction hypothesis, we have 
$$L_{2,t}(T)-L_2(T)=L_{2,t}(T')-L_2(T')\leq \frac{n'}{3}<\frac{n}{3}.$$
Thus, from now on, we assume that every support vertex of $T$ is adjacent to at most two leaves. In particular, $d$ has degree at most three in $T$. Let $e$ be the parent of $d$ in $T$. We distinguish two cases depending of $\deg(d)$.\vspace{1mm}\\
\textit{Case 1}. $\deg(d)=2$. We need to consider two more possibilities depending on the behavior of $e$.\vspace{0.5mm}

\textit{Subcase 1.1}. $\deg(e)=2$. Let $f$ be the parent of $e$ in $T$ and consider the tree $T'=T-\{c,d,e\}$ of order $n'=n-3\geq2$ (note that $f\neq r$ since diam$(T)\geq4$). Let $S$ be an $L_{2,t}(T)$-set. If $e\notin S$, it follows that $|S\cap N_{T}(f)|=2$ (for otherwise, $S\cup\{e\}$ is a $2$TLP set in $T$, which is impossible). Let $x\in S\cap N_{T}(f)$. It is then easy to see that $(S\setminus\{x\})\cup\{e\}$ is an $L_{2,t}(T)$-set, as well. Therefore, we may assume that $e\in S$. Moreover, both $c$ and $d$ necessarily belong to $S$. We now observe that $S\setminus\{c,d,e\}$ is a $2$TLP of $T'$, and so, $L_{2,t}(T')\geq L_{2,t}(T)-3$. On the other hand, every $L_2(T')$-set can be extended to a $2$LP of $T$ by adding $d$ and $c$ to it. Hence, $L_2(T)\geq L_2(T')+2$. Thus, by the induction hypothesis, we have 
$$L_{2,t}(T)-L_2(T)\leq L_{2,t}(T')+3-L_2(T')-2\leq \frac{n'}{3}+1=\frac{n}{3}.$$

\textit{Subcase 1.2}. $\deg(e)=k+1\geq3$. Let $e$ be adjacent to a leaf $g$. Let $T'=T-\{c\}$. Notice that every $L_{2,t}(T)$-set contains the vertex $c$. So, $L_{2,t}(T')\geq L_{2,t}(T)-1$. On the other hand, there exists an $L_2(T')$-set $S'$ containing both $g$ and $d$ by Lemma \ref{leaves}. This shows that $e\notin S'$. Thus, $S'\cup \{c\}$ is a $2$LP of $T$. This implies that $L_2(T)\geq L_2(T')+1$. We now get
$$L_{2,t}(T)-L_2(T)\leq L_{2,t}(T')+1-L_2(T')-1\leq \frac{n'}{3}<\frac{n}{3}.$$

Hence, we assume that every child of $e$ has degree at least two. Suppose that $d_1,d_2,\cdots,d_k$ are the children of $e$ and $c_i$ is a leaf adjacent to $d_i$, $1\leq i\leq k$, where $d_1=d$ and $c_1=c$. We first suppose that $k\geq3$ and consider the tree $T'=T-\{d_1, c_1\}$ of order $n'=n-2$. Every $L_{2,t}(T)$-set $S$ contains at most two children of $e$. So we can assume, without loss of generality, that $d_{1}\notin S$. It then follows that $c_1\in S$ by the maximality of $S$. Hence, $S\setminus \{c_1\}$ is a $2$TLP of $T'$. So, $L_{2,t}(T')\geq L_{2,t}(T)-1$. On the other hand, $S'\cup\{c_{1}\}$ is a $2$LP set of $T$ for any $L_2(T')$-set $S'$. Thus, $L_2(T)\geq|S'\cup \{c_1\}|=L_2(T')+1$. Therefore, 
$$L_{2,t}(T)-L_2(T)\leq L_{2,t}(T')+1-L_2(T')-1\leq \frac{n'}{3}<\frac{n}{3}.$$

Suppose now that $k=2$. By Lemma \ref{leaves}, there exists an $L_{2,t}(T)$-set $S$ containing all leaves adjacent to $d_1$ and $d_2$. We distinguish two more possibilities.\vspace{0.5mm}

\textit{Subcase 1.2.1}. The vertex $d_2$ has two leaves $c_2$ and $c_{2}'$. We set $T'=T-\{c_1,c_2,c_{2}'\}$. Note that $S\setminus \{c_1,c_2,c_{2}'\}$ is a $2$TLP set in $T'$. Thus, $L_{2,t}(T')\geq L_{2,t}(T)-3$. Furthermore, there exists an $L_2(T')$-set $S'$ containing $d_1$ and $d_2$ by Lemma \ref{leaves}. Hence, $S'\cup \{c_1,c_2\}$ is a $2$LP set in $T$, and we have $L_2(T)\geq L_2(T')+2$. Now, by the induction hypothesis, we get 
$$L_{2,t}(T)-L_2(T)\leq L_{2,t}(T')+3-L_2(T')-2\leq \frac{n'}{3}+1=\frac{n}{3}.$$

\textit{Subcase 1.2.2}. The vertex $d_2$ has only the leaf $c_2$. We set $T'=T-\{c_1,c_2\}$. Then, $S\setminus \{c_1,c_2\}$ is a $2$TLP set in $T'$. Hence, $L_{2,t}(T')\geq L_{2,t}(T)-2$. Similarly to the discussion given in Subcase 1.2.1, we have $L_2(T)\geq L_2(T')+2$. Therefore, 
$$L_{2,t}(T)-L_2(T)\leq L_{2,t}(T')+2-L_2(T')-2\leq \frac{n'}{3}<\frac{n}{3}.$$
\textit{Case 2}. $\deg(d)=3$. Let $d$ have two leaves $c_1=c$ and $c_2$. Suppose first that $e$ is adjacent to precisely one leaf $g_1$ and consider the tree $T'=T-\{g_1\}$. By Lemma \ref{leaves}, there exists an $L_{2,t}(T)$-set $S$ containing $g_1$. Then, $L_{2,t}(T')\geq|S\setminus \{g_1\}|=L_{2,t}(T)-1$ as $S\setminus \{g_1\}$ is a $2$TLP in $T'$. On the other hand, there is an $L_2(T')$-set $S'$ such that $N_{T'}[d]\cap S'=\{c_1,c_2\}$ by Lemma \ref{leaves}. This necessarily implies that $d,e\notin S'$. In such a situation, $S'\cup \{g_1\}$ is a $2$LP in $T$. Hence, $L_2(T)\geq L_2(T')+1$. Therefore, 
$$L_{2,t}(T)-L_2(T)\leq L_{2,t}(T')+1-L_2(T')-1\leq \frac{n'}{3}<\frac{n}{3}.$$

Suppose now that $e$ is adjacent to (exactly) two leaves $g_1$ and $g_2$. We set $T'=T-\{g_2\}$. Note that every $L_{2,t}(T)$-set $S$ contains two vertices in $N_{T}(e)$. So, we can assume that $d,g_1\in S$ and that the other children of $e$ do not belong to $S$. This in particular implies that $L_{2,t}(T')\geq L_{2,t}(T)$. In addition, $L_2(T)\geq L_2(T')$ since $T$ is obtained from $T'$ by joining a leaf to $e$. We now get, 
$$L_{2,t}(T)-L_2(T)\leq L_{2,t}(T')-L_2(T')\leq \frac{n'}{3}<\frac{n}{3}.$$

Hence, we may assume that $e$ does not have any leaf. If $e$ has a child $d'$ of degree two, then we get the desired inequality similar to Case 1. So, let every child of $e$ have degree three. Let $e$ have $k'$ children $d_1,d_2,\cdots,d_{k'}$, and let $d_i$ have two leaves $c_{i1}$ and $c_{i2}$ for each $1\leq i\leq k'$. Consider the tree $T'=T-\{e,d_1,\cdots,d_{k'},c_{11},c_{12},\cdots,c_{k'1},c_{k'2}\}$ of order $n'=n-3k'-1\geq2$. We need to consider two possible cases depending of $k'$.\vspace{0.5mm}

\textit{Subcase 2.1}. $k'\geq2$. There is an $L_{2,t}(T)$-set $S$ such that $\{c_{11},c_{12},\cdots,c_{k'1},c_{k'2}\}\subseteq S$ by Lemma \ref{leaves}. Moreover, we may assume that $d=d_1,d_2\in S$ by the maximality of $S$. In such a situation, $S\setminus \{d_1,d_2,c_{11},c_{12},\cdots,c_{k'1},c_{k'2}\}$ is a $2$TLP set in $T'$. Therefore, $L_{2,t}(T')\geq L_{2,t}(T)-2k'-2$. On the other hand, $S'\cup \{c_{11},c_{12},\cdots,c_{k'1},c_{k'2}\}$ is a $2$LP in $T$ for each $L_2(T')$-set $S'$. Hence, $L_2(T)\geq L_2(T')+2k'$. Together the last two inequalities and the induction hypothesis, we deduce that
$$L_{2,t}(T)-L_2(T)\leq L_{2,t}(T')+2k'+2-L_2(T')-2k'\leq \frac{n'}{3}+2<\frac{n}{3}.$$

\textit{Subcase 2.2}. $k'=1$. We set $T'=T-\{e,d_{1},c_{11},c_{12}\}$. A similar discussion shows that $\{d_{1},c_{11},c_{12}\}\subseteq S$ for some $L_{2,t}(T)$-set $S$, and that $S\setminus \{d_{1},c_{11},c_{12}\}$ is a $2$TLP set in $T'$. This leads to $L_{2,t}(T')\geq L_{2,t}(T)-3$. Moreover, any $L_2(T')$-set $S'$ can be extended to a $2$LP in $T$ by adding $c_{11}$ and $c_{12}$ to it. Therefore, $L_2(T)\geq L_2(T')+2$. We now have
$$L_{2,t}(T)-L_2(T)\leq L_{2,t}(T')+3-L_2(T')-2\leq \frac{n'}{3}+1<\frac{n}{3}.$$
Indeed, in all possible cases, we have proved the desired upper bound.

That the bound is sharp, may be seen as follows. Consider three paths $P_{1}:x_{1}\cdots x_{3t-1}$, $P_{2}:y_{1}\cdots y_{3t}$ and $P_{3}:z_{1}\cdots z_{3t-1}$ for any positive integer $t$. Let $T$ be obtained from $P_{1}+P_{2}+P_{3}$ by joining a new vertex $u$ to the vertices $x_1$, $y_1$ and $z_1$. We observe that $V(T)\setminus\{x_1\}$ is an $L_{2,t}(T)$-set of cardinality $9t-2$. Moreover, it is easy to see that $\{x_{3i-1},x_{3i}\}_{i=1}^{t-1}\cup \{y_{3i-1},y_{3i}\}_{i=1}^{t}\cup \{z_{3i},z_{3i+1}\}_{i=1}^{t-1}\cup \{u,x_{3t-1},z_{1}\}$ is an $L_{2}(T)$-set of cardinality $6t-1$. Therefore, $L_{2,t}(T)-L_{2}(T)=3t-1=\lfloor|V(T)|/3\rfloor$. This completes the proof.
\end{proof}


\section{Concluding remarks}

Concerning our contributions to the total version of limited packing in graphs in Section $4$, a natural problem arises here. In fact, the problem of vertex partitioning of graphs into $k$TLP sets can be considered from some points of view. It is the dual problem of vertex partitioning into $k$-tuple total dominating sets (see \cite{cdh,sv} for more information). Moreover, it is a generalization of the concept of injective coloring in graphs as the color class of this graph coloring are the very open packing sets (see \cite{bsy,hkss}).

We bounded $\chi_{\times2}$ for the lexicographic product graphs in this paper. It could be of interest to give the exact value of this parameter or bounding it for the other standard product graphs (the Cartesian, direct and strong ones). We conclude with the following problem.\vspace{0.5mm}\\
\textbf{Problem.} Determine the complexity of the decision problem associated with $\chi_{\times k}$ for $k\geq2$. In fact, it is worth proving that the problem is NP-complete (in particular, for some important families of graphs).\vspace{4.5mm}\\
\textbf{Acknowledgments}\vspace{0.5mm}\\
This work has been supported by the Discrete Mathematics Laboratory of the Faculty of Mathematical Sciences at Alzahra University.


\end{document}